\documentclass[11pt]{amsart}

\usepackage[leqno]{amsmath}
\usepackage{amssymb}
\usepackage{amsxtra}
\usepackage{amscd}
\usepackage{hyperref}
\usepackage{graphicx, color}
\usepackage{dcpic}

\addtolength{\oddsidemargin}{-.5in}
\addtolength{\evensidemargin}{-.5in}
\addtolength{\textwidth}{1in}
\addtolength{\footskip}{.2in}

\newtheorem{thm}{Theorem}
\newtheorem{prop}[thm]{Proposition}

\newtheorem{defn}[thm]{Definition}

\newtheorem{lemma}[thm]{Lemma}
\newtheorem{cor}[thm]{Corollary}

\newcommand{\I}{\ensuremath{\mathbb{I}}}

\newcommand{\Z}{\mathbb{Z}}

\newcommand{\graph}[1]{\Gamma_{L}}
\newcommand{\circles}[1]{\ensuremath{\mathrm{\textsc{Circles}}(#1)}}

\newcommand{\lra}{\longrightarrow}
\newcommand{\inlinediag}[2][0.33]{\includegraphics[scale=#1]{./#2}}
\newcommand{\myfig}[3][0.5]{\begin{center}\begin{figure}\includegraphics[scale=#1]{./#2}\caption{#3}\label{fig:#2}\end{figure}\end{center}}

\newcommand{\cross}[1]{\ensuremath{\mathrm{\textsc{cr}}(#1)}}

\newcommand{\resolution}[1]{\ensuremath{\mathrm{\textsc{res}}(#1)}}

\newcommand{\lefty}[1]{\overleftarrow{#1}}
\newcommand{\righty}[1]{\overrightarrow{#1}}


\newcommand{\cleaved}[1]{\mathcal{C\!L}_{#1}}
\newcommand{\mcleaved}[1]{\mathcal{MC\!L}(#1)}

\newcommand{\leftComplex}[1]{\langle\!\!\!\langle\,#1\,]\!]} 
\newcommand{\rightComplex}[1]{[\![\, #1 \,\rangle\!\!\!\rangle}

\newcommand{\Oz}{Ozsv\'{a}th}

\title{Planar algebras and the decategorification of bordered Khovanov homology}
\author{Lawrence P. Roberts}

\begin{document}
\begin{abstract}    
 We give a simple, combinatorial construction of a unital, spherical, non-degenerate $\ast$-planar algebra over the ring $\Z[q^{1/2},q^{-1/2}]$.  This planar algebra is similar in spirit to the Temperley-Lieb planar algebra, but computations show that they are different. The construction comes from the combinatorics of the decategorifications of the type A and type D structures in the author's previous work on bordered Khovanov homology \cite{Deca}.  In particular,  the construction illustrates how gluing of tangles occurs in the bordered Khovanov homology (\cite{RobA}) and its difference from that in Khovanov's tangle homology, \cite{Khta}, without being encumbered by any extra homological algebra. It also provides a simple framework for showing that these theories are not related through a simple process, thereby confirming recent work of A. Manion, \cite{Mani}. Furthermore, using Khovanov's conventions and a state sum approach to the Jones polynomial, we obtain new invariant for tangles in $\Sigma \times [-1,1]$  where $\Sigma$ is a compact, planar surface with boundary, and the tangle intersects each boundary cylinder in an even number of points. This construction naturally generalizes Khovanov's approach to the Jones polynomial.  \\
\end{abstract}
\maketitle

\section{Introduction}

\noindent Let $S$ be an oriented two-sphere.

\begin{defn}[Disc configurations]\label{defn:discconfiguration}
A \em{disc configuration} in $S$ is a non-empty ordered set $\mathbb{D} = (\lefty{D}_{0}; \lefty{D}_{1}, \ldots, \lefty{D}_{m})$, $m \geq 0$, of closed discs embedded in $S$, and a choice of points $\ast_{\lefty{D}_{i}} \in \partial \lefty{D}_{i}$  for each $i=0, \ldots, m$, such that
\begin{enumerate}
\item $\lefty{D}_{i} \subset \mathrm{int}\lefty{D}_{0}$ for each $i \geq 1$, and
\item $\lefty{D}_{i} \cap \lefty{D}_{j} = \emptyset$ for $i,j \geq 1$
\end{enumerate}
\end{defn}

\noindent An example of a disc configuration is given in Figure \ref{fig:discdiagram}.\\
\ \\
\noindent Each disc $\lefty{D}_{i}$ in $\mathbb{D}$ inherits an orientation from $S$, and $\partial\lefty{D}_{i}$ is oriented as the boundary $\lefty{D}_{i}$.  The discs $\lefty{D}_{i}$ will be called {\em inside discs}. The  {\em outside discs} for $\mathbb{D}$ are the discs $\righty{D}_{i} = S \backslash \big(\mathrm{int} \lefty{D}_{i}\big)$. \\
\ \\
\noindent Given a disc configuration $\mathbb{D}$ we identify an oriented sub-surface of $S$
$$\Sigma_{\mathbb{D}} = \lefty{D}_{0} \backslash \left(\bigcup_{i=1}^{m} \mathrm{int}\,\lefty{D}_{i}\right)$$

\begin{center}
\begin{figure}
\includegraphics[scale=0.75]{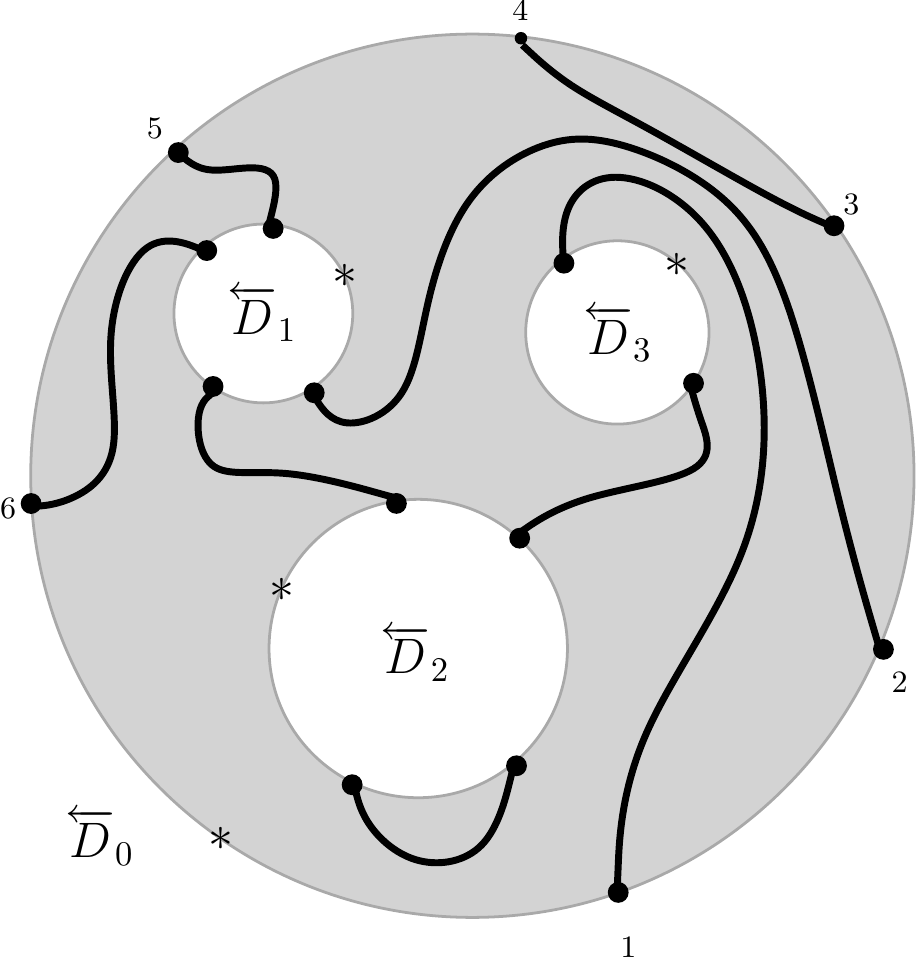}
\caption{A planar diagram with signature $(3;2,2,1)$ subordinate to the disc configuration $\mathbb{D}$. $\Sigma_{\mathbb{D}}$ is the shaded planar surface. The points in $\partial D_{0}$ have been labeled according to their ordering.}\label{fig:discdiagram}
\end{figure}
\end{center}

\begin{defn}[Planar diagram]\label{defn:planardiagram}
A planar diagram $P$ subordinate to a disc configuration $\mathbb{D}$ is a set of arcs and circles properly and disjointly embedded in $\Sigma_{\mathbb{D}}$ such that each boundary circle of $\Sigma_{\mathbb{D}}$ intersects the arcs of $P$ in an {\em even} number of points.
\end{defn}

\noindent Given $P$ let $\partial\lefty{D}_{i} \cap P$ be ordered according to the orientation of $\partial\lefty{D}_{i} \backslash\{\ast_{\lefty{D}_{i}}\}$. the signature of $P$, denoted $\textsc{Sign}(P)$, is the tuple $(n_{0}; n_{1}, \ldots n_{m})$ where $2n_{i} = |\partial\lefty{D}_{i} \cap P|$. The diagram in Figure \ref{fig:discdiagram} is a planar diagram. \\
\ \\
\noindent To an oriented circle with a marked point $\ast$ and $2n$ labeled points, we will associate a free $\Z[q^{1/2},q^{-1/2}]$-module $\mathcal{I}_{2n}$. Our goal is to define for any planar diagram $P$  subordinate to $\mathbb{D}$, with $\textsc{Sign}(P)=(n_{0}; n_{1}, \ldots, n_{m})$, a linear map
$$
Z_{P} :  \mathcal{I}_{2n_{1}}\!\otimes\!\mathcal{I}_{2n_{2}}\!\otimes \cdots \otimes\!\mathcal{I}_{2n_{m}} \lra \mathcal{I}_{2n_{0}}
$$
where the tensor products are taken over $\Z[q^{1/2}, q^{-1/2}]$. \\
\ \\
\noindent  The module $\mathcal{I}_{2n}$ comes from the idempotents of the differential bigraded algebra $\mathcal{B}\Gamma_{2n}$ used in bordered Khovanov homology \cite{RobA}, \cite{RobD}, and which supports the decategorification of differential bigraded modules over $\mathcal{B}\Gamma_{2n}$, \cite{Deca}. The maps $Z_{P}$ are a combinatorial generalization to planar surfaces of the combinatorics of decategorification in \cite{Deca}. Our motivation, besides the simplicity of the construction, is 1) to use these maps to compare the constructrions of bordered Khovanov homology to Khovanov's tangle homology, and 2) to give a simple generalization of the Jones polynomial to tangles with good compositional properties, that is different from those the author could find in the literature.  \\
\ \\
\subsection{The modules $\mathcal{I}_{2n}$} To an oriented circle $C$, embedded in an oriented sphere $S$, with a marked point $\ast \in C$, and a subset $Q \subset C \backslash\{\ast\}$ of $2n \geq 0$ points, we assign the free $\Z[q^{1/2}, q^{-1/2}]$-module generated by the elements of the following set:

\begin{defn}[Cleaved links]
$\cleaved{n}$ is the set of equivalence classes of pairs $(L,\sigma)$ where
\begin{enumerate}
\item $L$ is a link embedded in $S$ so that each component of $L$ transversely intersects $C$,
\item $L \cap C = Q$
\item $\sigma: \circles{L} \longrightarrow \{+,-\}$
\end{enumerate}
where two such pairs are equivalent if there is an orientation preserving diffeomorphism of $S$ which maps the equator to the equator, carries $L_{1}$ to $L_{2}$, and preserves the marked points and decorations on each component. 
\end{defn} 

\noindent The elements of $\bigcup_{n=0}^{\infty} \cleaved{n}$ will be called {\em cleaved links} and the maps $\sigma$ will be called decorations. We note that $\cleaved{0}$ is the empty link in a sphere with an marked equator. 

 \begin{defn}
 For each $n \geq 0$, $\mathcal{I}_{2n}$ is the free $\Z[q^{1/2}, q^{-1/2}]$-module generated by the elements of $\cleaved{n}$. The generator corresponding to $(L,\sigma) \in \cleaved{n}$ will be denoted $I_{(L,\sigma)}$. 
 \end{defn}

 \noindent{\bf Examples:} When $n=0$, there is only one equivalence class $I_{0}$ of cleaved links, so $\mathcal{I}_{0} =  \Z[q^{1/2}, q^{-1/2}] I_{0}$. The generators of $\mathcal{I}_{2}$ are cleaved links with one component $K$  which intersects the equator $C$ exactly twice.  $K$ can be decorated with either a $+$ or a $-$, corresponding to the two generators $I_{+}$ and $I_{-}$ for  $\mathcal{I}_{2}$.  Thus,  $\mathcal{I}_{2} \cong \Z[q^{1/2},q^{-1/2}] I_{+} \oplus \Z[q^{1/2},q^{-1/2}] I_{-}$. For $\mathcal{I}_{4}$, there are twelve generators corresponding to the elements in $\cleaved{2}$, depicted and labeled in Figure \ref{fig:I4Gens}.

\myfig{I4Gens}{The twelve generators of $\mathcal{I}_{4}$ grouped based on their inside and outside matchings. There a two generators of type $A$ and $D$, and four of type $B$ and $C$, as determined by the choice of decorations.}

\subsection{The maps $Z_{P}$} To define the map $Z_{P}$ we describe it on the generators of $\mathcal{I}_{2n_{1}} \otimes \mathcal{I}_{2n_{2}} \otimes \cdots \otimes \mathcal{I}_{2n_{m}}$ and then extend linearly. We start with some preliminaries.

\begin{defn}[Multiply Cleaved Links]\label{defn:multicleaved}
A decorated, multiply cleaved link $(M,\sigma)$ subordinate to a disc configuration $\mathbb{D}$ in $S$ is a {\rm (}possibly empty{\rm )} embedded submanifold $M \subset S$, consisting of finitely many circles, $\circles{M}$, and a {\em decoration} map $\sigma: \circles{M} \lra \{+,-\}$, such that  
\begin{enumerate}
\item\label{item:rightyi} $M$ intersects the boundary $\partial\lefty{D}_{i}$ of each disc $\lefty{D}_{i}$ transversely, and away from $\ast_{\lefty{D}_{i}}$, and 
\item\label{item:rightyii} $M$ has no component contained in $\mathrm{int}\righty{D}_{0}$, or in $\mathrm{int}\lefty{D}_{i}$ for $i=1, \ldots, m$. 
\end{enumerate} 
\end{defn}

\noindent Each multiply cleaved link is built on top of a planar diagram:

\begin{defn}\label{defn:assocplanar}
For a decorated, multiply cleaved link $(M,\sigma)$ subordinate to $\mathbb{D}$, $\mathfrak{P}(M)$ is the associated planar diagram $M \cap \Sigma_{\mathbb{D}}$. 
\end{defn}

\noindent  Then $\mathfrak{P}(M)$ is subordinate to $\mathbb{D}$ and inherits the orderings and marked points of $M$. For example, given the multiply cleaved link $(M,\sigma)$ in Figure \ref{fig:multi}, $\mathfrak{P}(M,\sigma)$ is the planar diagram in Figure \ref{fig:discdiagram}.  We use $\mathfrak{P}(M)$ to import all the notions defined for planar diagrams to multiply cleaved links.

\begin{center}
\begin{figure}
\includegraphics[scale=0.75]{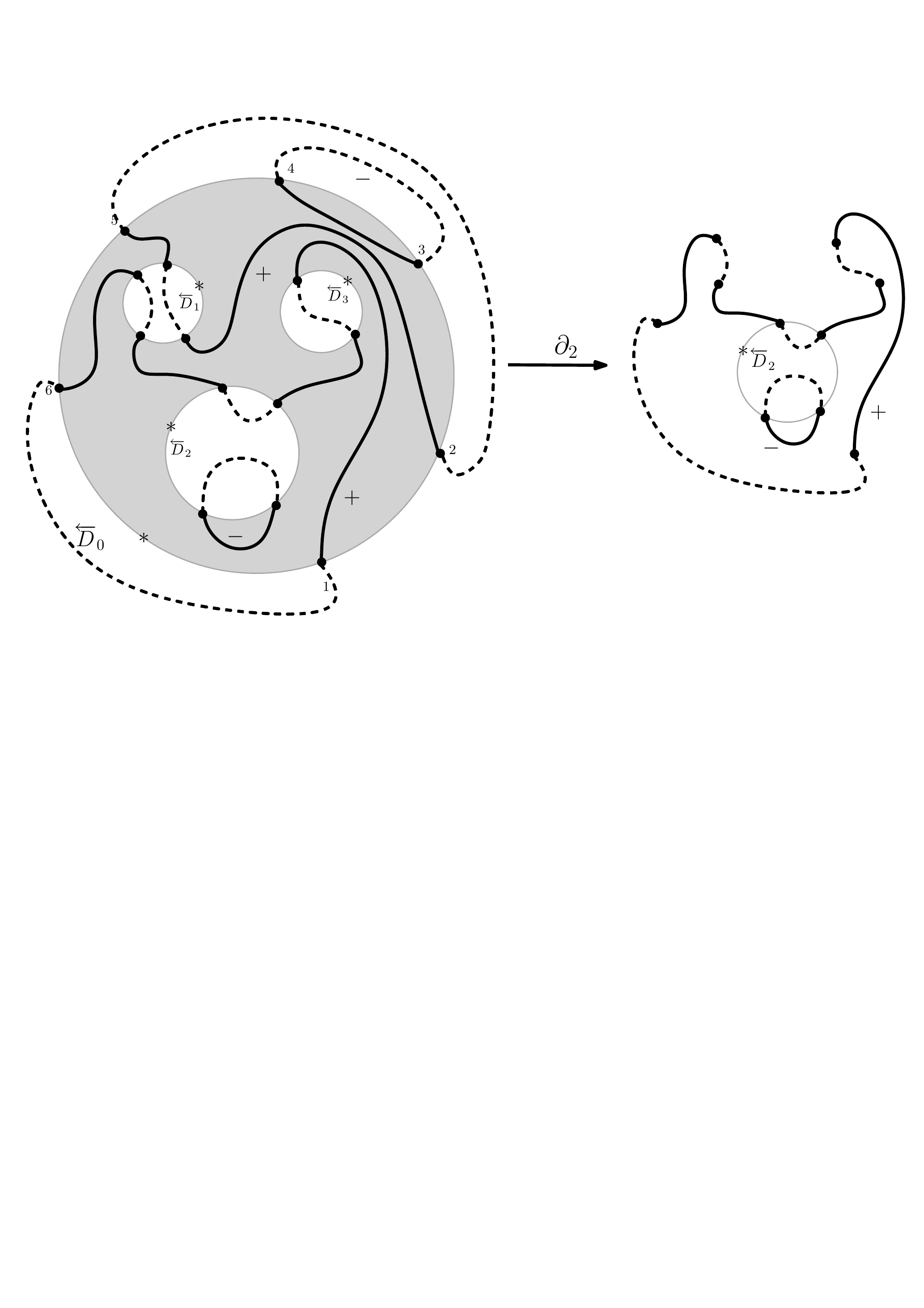}
\caption{The left diagram is a decorated, multiply cleaved link $(M,\sigma)$ with $\mathfrak{P}(M,\sigma)$ equal to the planar diagram in Figure \ref{fig:discdiagram}. The inside and outside matchings are shown as dashed lines. The $2^{nd}$ boundary $\partial_{2}(M,\sigma)$ is shown on the right.} \label{fig:multi}
\end{figure}
\end{center}

\begin{defn}
Let $(M,\sigma)$ be a decorated, multiply cleaved link. The set of equivalence classes of decorated, multiply cleaved links $(M,\sigma)$ with signature $(n_{0};n_{1}\!,\ldots,\!n_{m})$ will be denoted $\mcleaved{n_{0};n_{1}\!,\ldots,\!n_{m}}$.
\end{defn}

\noindent The advantage to using multiply cleaved links is that the have ``boundaries'':
\begin{defn} 
For each $i = 0, \ldots, m$, the $i^{th}$-boundary map $$\partial_{i} : \mcleaved{n_{0};n_{1}\!,\ldots,\!n_{m}} \lra \cleaved{n_{i}}$$  takes $(M,\sigma)$ to the cleaved link $\partial_{i}(M,\sigma)$ defined by
\begin{enumerate}
\item $\partial_{i}(M,\sigma)$ is subordinate to $(\lefty{D}_{i})$,
\item $\ast_{\partial_{i}(M,\sigma)} = \ast_{\lefty{D}_{i}}$
\item $\circles{\partial_{i}(M,\sigma)}$ is the subset of  the components of $M$ which intersect $\partial\lefty{D}_{i}$,
\item $\sigma_{\partial_{i}(M,\sigma)}$ is the restriction of $\sigma$ to $\circles{\partial_{i}(M,\sigma)}$
\end{enumerate}
\end{defn}

\noindent  For example, the $2^{nd}$ boundary of the multiply cleaved link on the left of Figure \ref{fig:multi} is the diagram on the right of that figure. \\
\ \\

\begin{defn}
Let $P$ be a planar diagram subordinate to $\mathbb{D}$. Then 
$$
\mathcal{M}(P) = \big\{\,(M,\sigma) \in \mcleaved{\textsc{Sign}(P)}\,\big|\,\mathfrak{P}(M) = P\, \big\}
$$ 
If we are given $(L_{i}, \sigma_{i}) \in \cleaved{n_{i}}$ for $i=0,\ldots, m$, then
$$
\mathcal{M}_{P}\big((L_{0}, \sigma_{0}); (L_{1},\sigma_{1}), \ldots, (L_{m},\sigma_{m})\big) = \big\{\,(M,\sigma)\in\mathcal{M}(P)\,\big|\, \partial_{i}(M,\sigma) = (L_{i}, \sigma_{i})\,\big\}
$$
\end{defn} 

\noindent  \noindent We will usually shorten this to $\mathcal{M}_{P}(L_{0}; L_{1}, \ldots, L_{m})$ with the understanding that there are decoration maps $\sigma_{i}$ on each of the cleaved links. The signature and the associated planar diagram do not depend on the decorations. The decorations, however, return in the {\em weight} of the link:

\begin{defn}
The weight $W(M,\sigma) \in \Z[q^{1/2},q^{-1/2}]$ of a decorated, multiply cleaved link $(M,\sigma)$ is the quantity 
$$
W(M,\sigma) = \left(\prod_{\footnotesize\begin{array}{c} C \in \circles{M}\\ \sigma(C) = + \end{array}\normalsize} q^{1 - \frac{1}{2} N_{M}(C)} \right)\cdot \left(\prod_{\footnotesize\begin{array}{c} C \in \circles{M}\\ \sigma(C) = - \end{array}\normalsize} q^{-1 + \frac{1}{2} N_{M}(C)} \right)
$$
where $N_{M}(C)$ is the number of circles $\partial \lefty{D}_{i}$, $i \geq 0$, which non-trivially intersect the circle $C \in \circles{M}$. If $\circles{M} = \emptyset$ then $W(M,\sigma) = 1$. 
\end{defn}

\noindent We are now in a position to define $Z_{P}$. For a generator 
$$
\xi = I_{(L_{1}, \sigma_{1})} \otimes I_{(L_{2}, \sigma_{2})} \otimes \cdots \otimes I_{(L_{m}, \sigma_{m})} \in \mathcal{I}_{2n_{1}} \otimes \mathcal{I}_{2n_{2}} \otimes \cdots \otimes \mathcal{I}_{2n_{m}}
$$
with $(L_{i}, \sigma_{i}) \in \cleaved{n_{i}}$ let
$$
Z_{P}(\xi) = \sum_{\footnotesize\begin{array}{c} (M,\sigma) \in \mathcal{M}(P) \\  \partial_{i}(M,\sigma) = (L_{i},\sigma_{i}), i \geq 1  \end{array}\normalsize} W(M,\sigma)I_{\partial_{0}(M,\sigma)}  
$$
\noindent Recall that $\mathcal{I}_{0} = \Z[q^{1/2},q^{-1/2}]$, so this definition applies even if $P$ does not intersect some of the boundaries $\partial\lefty{D}_{i}$ for $i \geq 0$. \\
\ \\
\noindent {\bf Warning:}  Changing the marked point on $\partial \lefty{D}_{i}$ can change the identification with the generators of $\mathcal{I}_{2n_{i}}$, and thus change the map $Z_{P}$. An example below illustrates this phenomenon.

\subsection{Planar algebra structure}

\noindent Planar diagrams can be composed, and we will describe how the partition maps behave under this composition. Suppose $T$ and $R$ are planar diagrams subordinate to disc configurations $\mathbb{D}_{T}$ and $\mathbb{D}_{R}$ with signatures $\textsc{Sign}(T) = (n_{0}; n_{1}, \ldots, n_{m})$ and $\textsc{Sign}(R) = (n'_{0}; n'_{1}, \ldots, n'_{m})$ such that $n_{0} = n'_{i}$ for some $i\in \{1, \ldots, m'\}$. Then there is a planar diagram $R \circ_{i} T$  subordinate to
$$
\mathbb{D}_{R} \circ_{i} \mathbb{D}_{T}=(\lefty{D}_{0}'; \lefty{D}'_{1},\ldots,\lefty{D}'_{i-1}, \lefty{D}_{1}, \ldots, \lefty{D}_{m}, \lefty{D}_{i+1},\ldots, \lefty{D}'_{m'})
$$ 
obtained by gluing the discs $S_{T} \backslash \mathrm{int}\righty{D}_{0}$ and $S_{R} \backslash \mathrm{int}\lefty{D}'_{i}$  along their boundaries, so that $\ast_{\lefty{D}_{0}}$ is glued to $\ast_{\lefty{D}'_{i}}$, and gluing induces and ordered bijection $\partial \lefty{D}_{0}\cap T \lra \partial \lefty{D}_{i'} \cap R$. 
\ \\
\noindent The definition of planar algebras in \cite{Jon1}, \cite{Jon2} provides a composition $Z_{R} \circ_{i} Z_{T}$ of $Z_{R}$ and $Z_{T}$ along the $i^{th}$ entry in the tensor product of the domain of $Z_{R}$. In particular, if $a_{i} \in \mathcal{I}_{2n'_{i}}$ and $b_{j} \in \mathcal{I}_{2n_{j}}$ then 
\begin{equation}\label{eqn:composition}
\begin{split}
Z_{R\circ_{i} T}(a_{1}\!\otimes\cdots\otimes &a_{i-1}\!\otimes\!b_{1}\!\otimes \cdots\otimes\!b_{m'}\!\otimes\!a_{i+1}\!\otimes \cdots \otimes\!a_{m}) \\
&:= Z_{R}(a_{1}\!\otimes \cdots \otimes\!a_{i-1}\!\otimes\!Z_{T}(b_{1}\!\otimes \cdots \otimes\!b_{m'})\!\otimes\!a_{i+1}\!\otimes \cdots \otimes\! a_{m})
\end{split}
\end{equation}

\noindent In section \ref{sec:properties} we prove

\begin{thm}[Composition]\label{thm:introcomp}
Let $T$ and $R$ be planar diagrams so that $R \circ_{i} T$ is defined. Then the partition map $Z_{R\circ_{i} T}$ equals the map $Z_{R} \circ_{i} Z_{T}$
\end{thm}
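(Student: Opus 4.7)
My plan is to expand both sides of the desired equality as sums over decorated multiply cleaved links and match them term by term via a weight-preserving bijection. Let $\ga = \partial\lefty{D}_0 = \partial\lefty{D}'_i$ denote the gluing circle in $R \circ_i T$; note that $\ga$ is not a disc boundary of $\mathbb{D}_R \circ_i \mathbb{D}_T$. Given $(M,\sigma) \in \mathcal{M}(R \circ_i T)$ with the prescribed inside boundaries, I would construct $(N,\tau) \in \mathcal{M}(T)$ by setting $N \cap \lefty{D}_0 := M \cap \lefty{D}_0$ and adjoining, in $\mathrm{int}\,\righty{D}_0$, cap arcs that match the points of $M \cap \ga$ in the same planar pattern as the arcs of $M$ lying outside $\lefty{D}_0$; symmetrically I would define $(M',\sigma') \in \mathcal{M}(R)$ by capping inside $\lefty{D}'_i$ via the $T$-side matching of $M$. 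A circle of $M$ disjoint from $\ga$ lies in one of the two halves and transfers unchanged to $N$ or $M'$ with its decoration; for a circle $C$ of $M$ that crosses $\ga$, tracing around $C$ alternately through $T$- and $R$-arcs shows that $C$ corresponds to exactly one circle $\hat C$ of $N$ (built from the $T$-arcs of $C$ plus the caps) and exactly one circle $\hat C'$ of $M'$, and I set $\tau(\hat C) = \sigma'(\hat C') := \sigma(C)$. The inverse operation glues $N \cap \lefty{D}_0$ with $M' \cap (S_R \setminus \mathrm{int}\,\lefty{D}'_i)$ along $\ga$, discards the caps, and transfers decorations back. This bijection pairs $(M,\sigma)$ with $((N,\tau),(M',\sigma'))$ satisfying $\partial_0(N,\tau) = \partial_i(M',\sigma')$, and clearly $\partial_0(M,\sigma) = \partial_0(M',\sigma')$.

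\textbf{Weight factorization.} I would next check that $W(M,\sigma) = W(N,\tau)\,W(M',\sigma')$. For a circle of $M$ disjoint from $\ga$, the corresponding component of $N$ or $M'$ carries the same decoration and meets the same disc boundaries (since $\ga$ is not a disc boundary of $R \circ_i T$), so the weight factors agree. For a circle $C$ of $M$ that crosses $\ga$, the companion $\hat C$ in $N$ additionally meets $\partial\lefty{D}_0 = \ga$, the companion $\hat C'$ in $M'$ additionally meets $\partial\lefty{D}'_i = \ga$, and their non-$\ga$ intersections with disc boundaries are disjoint and jointly recover those of $C$; hence
\[
N_N(\hat C) + N_{M'}(\hat C') \;=\; N_M(C) + 2.
\]
Writing $d := \sigma(C)$, the combined $q$-exponent $d\bigl((1 - N_N(\hat C)/2) + (1 - N_{M'}(\hat C')/2)\bigr)$ collapses to $d(1 - N_M(C)/2)$, which is exactly the contribution of $C$ to $W(M,\sigma)$. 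Multiplying over all components, summing the bijection, and comparing coefficients of each basis element $I_{(L,\rho)}$ of the image then yields the theorem.

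\textbf{Main obstacle.} The delicate step is to verify that the construction passes to equivalence classes of multiply cleaved links and that for any compatible pair $((N,\tau),(M',\sigma'))$ with $\partial_0(N,\tau) = \partial_i(M',\sigma')$ the decorations on corresponding components of $N$ and $M'$ automatically agree, so the reconstituted $M$ inherits a well-defined $\sigma$. Both issues reduce to the circle-tracing observation above: a circle of $M$ crossing $\ga$ contributes exactly one component to $\partial_0(N,\tau)$ and one to $\partial_i(M',\sigma')$, so decoration agreement along the cleaved link $\partial_0(N,\tau) = \partial_i(M',\sigma')$ is precisely the data needed. The remaining bookkeeping on diffeomorphism classes is routine once this observation is in hand.
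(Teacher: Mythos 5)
Your proposal is correct and follows essentially the same route as the paper: both reduce to generators and establish a weight-preserving bijection between $\mathcal{M}_{R\circ_{i}T}(\cdots)$ and the union over intermediate cleaved links $L$ of the products $\mathcal{M}_{R}(\cdots,L,\cdots)\times\mathcal{M}_{T}(L;\cdots)$, with your cut-and-cap construction being precisely the paper's inverse map $\Phi$ and your exponent identity $N_{N}(\hat{C})+N_{M'}(\hat{C}')=N_{M}(C)+2$ matching the paper's computation with $\lefty{A}+1$ and $\righty{A}+1$. No substantive differences to report.
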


\noindent In addition,

\begin{thm}
The maps $Z_{P}$ have the following properties:
\begin{enumerate}
\item \label{item:tempLieb}{\rm (}Temperley-Lieb property{\rm )} If $P$ has a circle component embedded in the interior of $\Sigma_{\mathbb{D}}$ then $$Z_{P} = (q+q^{-1}) Z_{P'}$$ where $P'$ is the planar diagram found by removing this component,
\item \label{item:conjugation}{\rm (}Conjugation property{\rm )} For $p \in \Z[q^{1/2}, q^{-1/2}]$ let the conjugate $p^{\ast}$ be the polynomial $p(q^{-1})$. There is an involution on $\cleaved{n}$ which allows us to extend to a morphism $\xi \lra \xi^{\ast}$ on $\mathcal{I}_{2n}$. For this map $$Z_{P}(\xi^{\ast}) = Z_{P}(\xi)^{\ast}$$
\end{enumerate}
\end{thm}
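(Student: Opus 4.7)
The plan is to read off both properties directly from the definition of $Z_P$ as a weighted sum indexed by $\mathcal{M}(P)$, by pairing up each multiply cleaved link in the sum with its image under an obvious deletion (part (\ref{item:tempLieb})) or decoration-flipping (part (\ref{item:conjugation})) operation.

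For the Temperley-Lieb property, fix an interior circle component $c$ of $P$. The first step is to observe that for every $(M,\sigma) \in \mathcal{M}(P)$ the circle $c$ appears as a component of $M$ disjoint from all of the boundary circles $\partial\lefty{D}_i$, so that $N_M(c) = 0$, and that $M\setminus c$ still satisfies conditions (\ref{item:rightyi}) and (\ref{item:rightyii}) of Definition \ref{defn:multicleaved} because $\mathrm{int}\,\Sigma_{\mathbb{D}}$ avoids $\mathrm{int}\,\righty{D}_0$ and every $\mathrm{int}\,\lefty{D}_i$ with $i \geq 1$. Deleting $c$ and recording $\sigma(c)$ then defines a bijection
\[
\mathcal{M}_{P}(L_0;L_1,\ldots,L_m) \;\longleftrightarrow\; \mathcal{M}_{P'}(L_0;L_1,\ldots,L_m)\times\{+,-\},
\]
under which the weight factors as $W(M,\sigma) = W(M\setminus c,\sigma|_{M\setminus c}) \cdot q^{\pm 1}$, with the sign determined by $\sigma(c)$. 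Because $c$ lies in the interior, $\partial_0$ is unaffected; summing over the two choices of $\sigma(c)$ contributes the scalar $q+q^{-1}$ to each coefficient and gives the claimed identity.

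For the conjugation property, the plan is to define the involution on $\cleaved{n}$ by $(L,\sigma) \mapsto (L,-\sigma)$ (flipping every decoration), note that it descends to equivalence classes since the equivalence relation already respects decorations, and extend it conjugate-linearly to $\mathcal{I}_{2n}$ via $(p\, I_{(L,\sigma)})^{\ast} := p^{\ast}\, I_{(L,-\sigma)}$; on tensor products the $\ast$-operation acts factor by factor. The two key identities are $\partial_i(M,-\sigma) = (\partial_i(M,\sigma))^{\ast}$ (the involution commutes with restriction to a boundary circle) and $W(M,-\sigma) = W(M,\sigma)^{\ast}$ (each factor $q^{\pm(1-N_M(C)/2)}$ inverts under a decoration flip). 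Substituting into the defining sum of $Z_P(\xi^{\ast})$ and comparing term by term with the sum for $Z_P(\xi)^{\ast}$ yields the identity on generators; conjugate-linearity then extends it to all of $\mathcal{I}_{2n_1}\otimes\cdots\otimes\mathcal{I}_{2n_m}$.

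Neither argument is deep, and the main work is careful bookkeeping. The point requiring the most care in (\ref{item:tempLieb}) is verifying that an interior circle $c$ is always a legitimate component of any $M \in \mathcal{M}(P)$ and contributes no intersections with any $\partial\lefty{D}_i$, so that the weight factorization really is just $q^{\pm 1}$; in (\ref{item:conjugation}) the delicate step is to keep the conjugate-linearity convention straight, so that the $\ast$-action on coefficients and on decorations combine in the weighted sum to give exactly conjugation of the output.
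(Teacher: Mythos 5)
Your proposal is correct and follows essentially the same route as the paper: the Temperley-Lieb property via the $2\!:\!1$ deletion map on $\mathcal{M}(P)$ with the interior circle contributing $q^{\pm 1}$ (since $N_{M}(C)=0$), and the conjugation property via the decoration-flipping bijection on $\mathcal{M}(P)$ together with the identities $\partial_{i}(M,\sigma)^{\ast}=(\partial_{i}(M,\sigma))^{\ast}$ and $W((M,\sigma)^{\ast})=W(M,\sigma)^{\ast}$. Your extra care about why the deleted circle is a legitimate component and why $M\setminus c$ remains a multiply cleaved link is a detail the paper leaves implicit, but it is not a different argument.
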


\noindent In the language of \cite{Jon2} these properties show that the planar algebra is a unital, spherical, $\ast$-planar algebra. It is unital since each planar diagram for a disc configuration $(\lefty{D}_{0})$ gives rise to a map $Z_{P}$ which satisfies the composition requirement, \cite{Jon2}. It is spherical since $\mathrm{dim}\,\mathcal{I}_{0} = 1$ (over $\Z[q^{1/2},q^{-1/2}]$) and the maps are invariant under isotopies. Finally, the conjugation $\xi \lra \xi^{\ast}$ acts correctly for it to be a $\ast$-planar algebra.\\
\ \\
\noindent Furthermore, it is non-degenerate in the sense that a standard pairing considered in \cite{Jon3} is non-degenerate on each $\mathcal{I}_{2n}$, see section \ref{sec:nondeg}. However, the pairing is not positive definite, as usually desired in the planar algebra approach to studying subfactors, \cite{Jon3}.\\
\ \\
\noindent Following the terminology of planar algebras, as in \cite{Jon1}, we will now call $Z_{P}$ the partition map for $P$.\\
\ \\
\noindent In the next section \ref{sec:properties} we justify the properties cited above. In section \ref{sec:reduction} we describe how this can be extended throught the usual Kauffman state summations to give invariants of tangles which generalize the Jones polynomial and compose well. In section \ref{sec:examples} we provide some example computations. Finally, in section \ref{sec:relationships}, we compare this construction to the Temperley-Lieb planar algebra and the decategorification of Khovanov's tangle homology.

\section{Properties of the partition maps for planar diagrams}\label{sec:properties}

\noindent We now verify the Temperley-Lieb, Conjugation, and Composition properties for the partition maps of planar diagrams.  

\subsection{Properties allowing simplifications of the diagram:}

\begin{prop}[Temperley-Lieb Property]
Let $P$ be a planar diagram and $C$ be a circle component of $P$. Let $P'$ be the planar diagram after deleting $C$ from $P$,  then $Z_{P} = (q + q^{-1}) Z_{P'}$
\end{prop}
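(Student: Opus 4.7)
The plan is to work generator-by-generator, and show that for each input $\xi = I_{(L_1,\sigma_1)} \otimes \cdots \otimes I_{(L_m,\sigma_m)}$ the summation defining $Z_P(\xi)$ splits into a factor $(q+q^{-1})$ times the summation defining $Z_{P'}(\xi)$.

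First I would set up a bijection
$$
\mathcal{M}_P\bigl((L_0,\sigma_0);(L_1,\sigma_1),\ldots,(L_m,\sigma_m)\bigr) \;\longleftrightarrow\; \mathcal{M}_{P'}\bigl((L_0,\sigma_0);(L_1,\sigma_1),\ldots,(L_m,\sigma_m)\bigr) \times \{+,-\}
$$
by sending a decorated multiply cleaved link $(M,\sigma)$ over $P$ to the pair $\bigl((M\setminus C, \sigma|_{M\setminus C}),\sigma(C)\bigr)$. The inverse adds the interior circle $C$ back and decorates it with the chosen sign. Because $C$ lies in $\mathrm{int}\,\Sigma_{\mathbb{D}}$ and is disjoint from every $\partial\lefty{D}_i$, this operation commutes with the boundary maps $\partial_i$, so it preserves the constraint $\partial_i(M,\sigma) = (L_i,\sigma_i)$ for all $i$, including $i=0$. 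I also need to note that this bijection descends to equivalence classes: an orientation-preserving diffeomorphism of $S$ carrying one $(M,\sigma)$ to another restricts to one doing the same on $M \setminus C$, and conversely any such diffeomorphism for the smaller link can be arranged (by an isotopy supported in a neighborhood of $C$ in the interior of a face of $P'$) to extend across the added circle.

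Next I would compare weights. Because $C$ is disjoint from every $\partial\lefty{D}_i$ for $i\ge 0$, we have $N_M(C) = 0$. Thus $C$ contributes
$$
q^{1 - \frac{1}{2}\cdot 0} = q \quad \text{or} \quad q^{-1 + \frac{1}{2}\cdot 0} = q^{-1}
$$
to $W(M,\sigma)$ according to whether $\sigma(C)=+$ or $-$. For every other component $C' \in \circles{M}$ we have $N_M(C') = N_{M\setminus C}(C')$, so the rest of the product defining the weight is unaffected. Hence $W(M,\sigma) = q^{\pm 1}\, W(M\setminus C,\sigma|_{M\setminus C})$.

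Putting these pieces together,
\begin{align*}
Z_P(\xi) &= \sum_{\substack{(M,\sigma)\in\mathcal{M}(P)\\ \partial_i(M,\sigma)=(L_i,\sigma_i),\,i\ge 1}} W(M,\sigma)\, I_{\partial_0(M,\sigma)}\\
&= \sum_{\substack{(M',\sigma')\in\mathcal{M}(P')\\ \partial_i(M',\sigma')=(L_i,\sigma_i),\,i\ge 1}} (q+q^{-1})\, W(M',\sigma')\, I_{\partial_0(M',\sigma')}\\
&= (q+q^{-1})\, Z_{P'}(\xi),
\end{align*}
and the result follows by linearity. The only nontrivial point in the argument is the verification that the forget/restore-$C$ map is a bijection on equivalence classes, rather than just on representatives; once that is checked, the rest is a direct computation using $N_M(C)=0$.
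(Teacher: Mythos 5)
Your proof is correct and follows essentially the same route as the paper's: the paper sets up the same $2\!:\!1$ forgetful map $\mathcal{M}(P) \to \mathcal{M}(P')$, partitions $\mathcal{M}(P)$ by the decoration on $C$, and observes that the two pieces contribute $q\,Z_{P'}(\xi)$ and $q^{-1}\,Z_{P'}(\xi)$ respectively. Your write-up is somewhat more explicit about why the weight factors as $q^{\pm 1}$ (via $N_M(C)=0$) and about well-definedness on equivalence classes, both of which the paper leaves implicit.
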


\begin{proof} There is a $2\!:\!1$ map $\mathcal{M}(P) \lra \mathcal{M}(P')$ which takes $(M,\sigma)$ to $(M',\sigma')$, where $(M',\sigma')$ has the same configuration of discs, marked points, and arcs as $M$, but the circle $C$ has been removed. Under this map $\sigma'$ is the restriction of $\sigma$ to the remaining circles. The pre-image of $(M',\sigma')$ consists of a two element set $\{M'_{\pm}\}$ where we have added $C$ back with decoration $\sigma(C) = \pm$. If we partition $\mathcal{M}(P)$ into $\mathcal{M}_{+}(P) \sqcup \mathcal{M}_{-}(P)$, based on the value of $\sigma(C)$, then $Z_{P}(\xi)$ can similarly be decomposed. From the definition of $Z_{P}$ the term corresponding to $\mathcal{M}_{+}(P)$ is seen to be $q\,Z_{P'}(\xi)$ while the other is $q^{-1}\,Z_{P'}(\xi)$.
\end{proof}

\begin{cor}
Let $P$ be a planar diagram subordinate to $\mathbb{D} = (\lefty{D}_{0}; \lefty{D}_{1})$ with signaturw $(0;0)$. Then $P$ consists solely of some number $N$ of circles, and $Z_{P}$ is the map \\ $\Z[q^{1/2},q^{-1/2}] \lra \Z[q^{1/2}, q^{-1/2}]$ which multiplies by $(q+q^{-1})^N$.
\end{cor}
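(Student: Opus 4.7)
The plan is to reduce this to the Temperley-Lieb property plus a base case. First I would argue that $P$ consists only of circles: since $\textsc{Sign}(P) = (0;0)$, neither $\partial\lefty{D}_{0}$ nor $\partial\lefty{D}_{1}$ meets $P$, so any arc component of $P$, being properly embedded in $\Sigma_{\mathbb{D}}$, would have endpoints on $\partial\Sigma_{\mathbb{D}} = \partial\lefty{D}_0 \cup \partial\lefty{D}_1$ and so could not exist. Hence every component of $P$ is a closed circle embedded in $\mathrm{int}\,\Sigma_{\mathbb{D}}$; let $N$ be the number of such circles.

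Next I would iterate the Temperley-Lieb property $N$ times: at each stage apply the previous proposition to peel off one circle of $P$, picking up a factor $(q + q^{-1})$. After $N$ steps we obtain $Z_{P} = (q+q^{-1})^{N} Z_{P_{\emptyset}}$, where $P_{\emptyset}$ is the empty planar diagram subordinate to $\mathbb{D} = (\lefty{D}_{0};\lefty{D}_{1})$.

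The final step is to identify $Z_{P_{\emptyset}}$ with the identity map $\Z[q^{1/2},q^{-1/2}] \to \Z[q^{1/2},q^{-1/2}]$ under the identification $\mathcal{I}_{0} = \Z[q^{1/2},q^{-1/2}]\, I_{0}$. For this I would unpack the definition of $\mathcal{M}(P_{\emptyset})$: a multiply cleaved link $(M,\sigma)$ with $\mathfrak{P}(M) = \emptyset$ has $M \cap \Sigma_{\mathbb{D}} = \emptyset$, so every component of $M$ lies in $\mathrm{int}\,\righty{D}_{0}$ or $\mathrm{int}\,\lefty{D}_{1}$. But condition \eqref{item:rightyii} of Definition \ref{defn:multicleaved} forbids this, so $M = \emptyset$. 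Since $W(\emptyset,\sigma) = 1$ and $\partial_{0}(\emptyset,\sigma) = I_{0}$, the sum defining $Z_{P_{\emptyset}}(I_{0})$ has a single term equal to $I_{0}$, giving the identity map.

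The main (minor) obstacle is the base case: one has to be careful about what $\mathcal{I}_{0}$ means and verify that no ``stray'' closed components of $M$ can hide inside $\righty{D}_{0}$ or $\lefty{D}_{1}$; condition \eqref{item:rightyii} is exactly what rules this out. Once this is clear, the result follows by a straightforward induction on $N$ using the Temperley-Lieb property.
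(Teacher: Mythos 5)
Your proposal is correct and matches the paper's (implicit) argument: the corollary is stated as a direct consequence of the Temperley-Lieb property, obtained by peeling off the $N$ circles one at a time and reducing to the empty diagram, whose partition map is the identity (as the paper records in the subsequent corollary). Your careful verification of the base case via condition (2) of Definition \ref{defn:multicleaved} is exactly the right justification, even though the paper leaves it unstated.
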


\noindent Note that $(q+q^{-1})^{N}$ is also the Jones polynomial of such a tangle, using Khovanov's conventions from \cite{Khov}.\\
\ \\
\noindent In addition, any disc $\lefty{D}_{i}$ with $n_{i}=0$ can be ``removed:''

\begin{prop}\label{prop:nointersect}
Suppose $P$ is a planar diagram subordinate to $\mathbb{D}=(\lefty{D}_{0};\lefty{D}_{1}, \ldots, \lefty{D}_{m})$ with $m \geq 1$. Suppose $\lefty{D}_{i}$ has $n_{i} = 0$ for some $i \geq 1$. Let $P'$ be planar diagram determined by $P$ and subordinate to $\mathbb{D}' = \mathbb{D}\backslash\{D_{i}\}$, with the inherited order. Then $Z_{P'}$ equals $Z_{P}$ under the canonical isomorphism  $\mathcal{I}_{2n_{1}}\otimes \cdots \otimes \mathcal{I}_{2n_{i}} \otimes \cdots \otimes \mathcal{I}_{2n_{m}}$ with $\mathcal{I}_{2n_{1}}\otimes \cdots \otimes \mathcal{I}_{2n_{i-1}} \otimes \mathcal{I}_{2n_{i+1}} \otimes \cdots \otimes \mathcal{I}_{2n_{m}}$ 
\end{prop}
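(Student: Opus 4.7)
Since $\mathcal{I}_{0} \cong \Z[q^{1/2},q^{-1/2}]$ is free of rank one on the empty cleaved link $I_{0}$, the canonical isomorphism sends $a_{1}\otimes\cdots\otimes a_{i-1}\otimes I_{0}\otimes a_{i+1}\otimes\cdots\otimes a_{m}$ to $a_{1}\otimes\cdots\otimes a_{i-1}\otimes a_{i+1}\otimes\cdots\otimes a_{m}$, and by linearity it suffices to check that $Z_{P}$ and $Z_{P'}$ agree on such generators. The diagram $P$, viewed as a subset of $S$, is literally equal to $P'$: since $n_{i}=0$ we have $P\cap\partial\lefty{D}_{i}=\emptyset$, so $P\subset\Sigma_{\mathbb{D}}\subset\Sigma_{\mathbb{D}'}$ already qualifies as a planar diagram subordinate to $\mathbb{D}'$.

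The main step is a set-theoretic equality
\[
\mathcal{M}_{P}\bigl((L_{0},\sigma_{0});(L_{1},\sigma_{1}),\ldots,(L_{i-1},\sigma_{i-1}),I_{0},(L_{i+1},\sigma_{i+1}),\ldots,(L_{m},\sigma_{m})\bigr) = \mathcal{M}_{P'}\bigl((L_{0},\sigma_{0});(L_{1},\sigma_{1}),\ldots,(L_{i-1},\sigma_{i-1}),(L_{i+1},\sigma_{i+1}),\ldots,(L_{m},\sigma_{m})\bigr)
\]
between the indexing sets of the two sums. For the forward inclusion, take $(M,\sigma)$ in the left-hand set. Since $n_{i}=0$, no arc of $P$ meets $\partial\lefty{D}_{i}$, so $M\cap\partial\lefty{D}_{i}=\emptyset$; condition (\ref{item:rightyii}) of Definition \ref{defn:multicleaved} then forces $M\cap\mathrm{int}\,\lefty{D}_{i}=\emptyset$, so $M$ satisfies the multiply cleaved link axioms with respect to $\mathbb{D}'$ as well. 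Conversely, for $(M',\sigma')\in\mathcal{M}(P')$, the identity $M'\cap\Sigma_{\mathbb{D}'}=P'$ combined with $P'\cap\lefty{D}_{i}=\emptyset$ rules out any component of $M'$ lying in $\mathrm{int}\,\lefty{D}_{i}$, so $M'\cap\lefty{D}_{i}=\emptyset$; the conditions of Definition \ref{defn:multicleaved} relative to $\mathbb{D}$ (including transversality at $\partial\lefty{D}_{i}$, which is vacuous) are then automatic, and $\partial_{i}(M',\sigma')=I_{0}$.

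Finally, I check that weights match. The quantity $N_{M}(C)$ counts those $\partial\lefty{D}_{j}$ with $j\geq 0$ that nontrivially intersect $C$; removing $\lefty{D}_{i}$ from $\mathbb{D}$ drops the index $j=i$ from this count, but $\partial\lefty{D}_{i}\cap M=\emptyset$ means that index contributed nothing. Hence $W(M,\sigma)$ is unchanged, and since $\partial_{0}$ is computed using $\lefty{D}_{0}$ (common to $\mathbb{D}$ and $\mathbb{D}'$), each term $W(M,\sigma)I_{\partial_{0}(M,\sigma)}$ appears identically in both sums. I do not foresee any real obstacle: the proposition is essentially a bookkeeping check that the multiply cleaved link data is insensitive to an inside disc that no arc of $P$ crosses. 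The only point requiring care is the invariance of $N_{M}$ under dropping $\partial\lefty{D}_{i}$ from the list of reference circles, which is immediate once one knows $\partial\lefty{D}_{i}\cap M=\emptyset$.
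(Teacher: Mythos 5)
Your proof is correct and follows the same route as the paper's (much terser) argument: the canonical isomorphism comes from $\mathcal{I}_{0}\cong\Z[q^{1/2},q^{-1/2}]$, and the partition maps agree because the multiply cleaved links over $P$ and $P'$ are literally the same configurations with the same weights. Your added detail — the identification of the indexing sets and the observation that $N_{M}(C)$ is unaffected by dropping $\partial\lefty{D}_{i}$ since it meets no circle of $M$ — is exactly the bookkeeping the paper leaves implicit.
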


\begin{proof} Since $n_{i} = 0$, $\mathcal{I}_{2n_{i}} = \Z[q^{1/2}, q^{-1/2}]$ in the tensor product $\mathcal{I}_{2n_{1}}\otimes \cdots \otimes \mathcal{I}_{2n_{m}}$, thereby establishing the canonical isomorphism in the statement. The image of each generator is determined only by the configuration of circles in multiply cleaved links in $\mathcal{M}(P)$. As these are the same in the two diagrams, the partition maps $Z_{P}$ and $Z_{P'}$ are identical, under the isomorphism.
\end{proof}

\begin{cor}
 If $P$ is the empty tangle, {\rm (}i.e. it has no components {\rm )} then both $\mathcal{I}_{2n_{1}} \otimes \cdots \otimes \mathcal{I}_{2n_{m}}$ and $\mathcal{I}_{2n_{0}}$ are isomorphic to $\Z[q^{1/2}, q^{-1/2}]$, and $Z_{P}$ is the identity map $\Z[q^{1/2}, q^{-1/2}] \lra \Z[q^{1/2}, q^{-1/2}]$ under the isomorphisms.
\end{cor}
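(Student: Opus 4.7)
The plan is to unpack the definitions and observe that every term in $Z_{P}$ reduces to the identity. Since $P$ has no components, it contributes no intersection points to any boundary circle, so the signature of $P$ must be $(0;0,\ldots,0)$, i.e.\ $n_{i}=0$ for every $i=0,1,\ldots,m$. By the very first example after the definition of $\mathcal{I}_{2n}$, each $\mathcal{I}_{2n_{i}} = \mathcal{I}_{0}$ is the rank-one free module $\Z[q^{1/2},q^{-1/2}]I_{0}$, and the tensor product $\mathcal{I}_{2n_{1}}\otimes\cdots\otimes\mathcal{I}_{2n_{m}}$ is canonically isomorphic to $\Z[q^{1/2},q^{-1/2}]$, sending $I_{0}\otimes\cdots\otimes I_{0}\mapsto 1$; likewise for $\mathcal{I}_{2n_{0}}$.

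Next I would identify $\mathcal{M}(P)$. Any $(M,\sigma)\in \mathcal{M}(P)$ satisfies $M\cap \Sigma_{\mathbb{D}} = \mathfrak{P}(M) = P = \emptyset$, so the circles of $M$ are contained in the complement of $\Sigma_{\mathbb{D}}$, which is $\mathrm{int}\righty{D}_{0}\cup\bigcup_{i=1}^{m}\mathrm{int}\lefty{D}_{i}$. Condition (\ref{item:rightyii}) of Definition \ref{defn:multicleaved} forbids any component of $M$ from living in one of these open pieces, so $M$ must be empty. Hence $\mathcal{M}(P)$ is the singleton $\{(\emptyset,\sigma)\}$ consisting of the empty multiply cleaved link, with weight $W(\emptyset,\sigma)=1$ by the convention in the definition of $W$.

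Finally I would evaluate $Z_{P}$ on the unique (up to scalar) generator $\xi = I_{0}\otimes\cdots\otimes I_{0}$. The only $(M,\sigma)\in \mathcal{M}(P)$ with $\partial_{i}(M,\sigma)$ equal to the empty cleaved link for $i\geq 1$ is $(\emptyset,\sigma)$ itself, and its $0$-th boundary is again the empty cleaved link, giving the generator $I_{0}\in \mathcal{I}_{0}$. Therefore
$$
Z_{P}(\xi) = W(\emptyset,\sigma)\,I_{\partial_{0}(\emptyset,\sigma)} = 1\cdot I_{0} = I_{0},
$$
which is exactly the element $1$ under the canonical identification $\mathcal{I}_{0}\cong \Z[q^{1/2},q^{-1/2}]$. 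Extending $\Z[q^{1/2},q^{-1/2}]$-linearly yields the identity map, as claimed.

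The only mild obstacle is bookkeeping: one must verify that condition (\ref{item:rightyii}) of Definition \ref{defn:multicleaved} truly rules out stray circles in the outer or inner discs and that the empty multiply cleaved link is admissible (so that $\mathcal{M}(P)$ is nonempty rather than empty). Both are immediate from the stated conventions, so no substantive difficulty arises. Alternatively, one can derive the corollary by iterating Proposition \ref{prop:nointersect} to remove all inner discs $\lefty{D}_{1},\ldots,\lefty{D}_{m}$ (each has $n_{i}=0$) and then noting that the resulting planar diagram subordinate to $(\lefty{D}_{0})$ is empty, giving the identity on $\mathcal{I}_{0}$.
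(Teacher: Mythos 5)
Your proof is correct and is essentially the argument the paper intends: the corollary is stated immediately after Proposition \ref{prop:nointersect} precisely because one can strip away all the discs with $n_{i}=0$ and then observe that the empty diagram forces $\mathcal{M}(P)$ to contain only the empty multiply cleaved link (by condition (\ref{item:rightyii}) of Definition \ref{defn:multicleaved}), whose weight is $1$ by convention. Your direct verification and your alternative via Proposition \ref{prop:nointersect} are both valid, and the bookkeeping points you flag (the admissibility of the empty link and the exclusion of stray circles) are handled exactly as you say.
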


\subsection{Partition maps and conjugation:}

\noindent Multiply cleaved links admit an involution we will call {\em conjugation}:

\begin{defn}
Let $(M,\sigma)$ be a decorated, multiply cleaved link. The conjugate $(M,\sigma)^{\ast}$ is the decorated, multiply cleaved link $(M,\sigma')$ where $\sigma'(C) = \pm$ if and only if $\sigma(C) = \mp$. 
\end{defn}

\noindent We now consider the effect of the conjugation $(M,\sigma) \lra (M,\sigma)^{\ast}$ on the partition map. 

\begin{defn}
For $p \in \Z[q^{1/2}, q^{-1/2}]$,  $p^{\ast}(q) = p(q^{-1})$ is the {\em conjugate} of $p$. 
\end{defn}

\noindent The following proposition follows directly from the definitions.

\begin{prop}
Let $(M,\sigma)$ be a decorated multiply cleaved link. Then $\textsc{Sign}(M,\sigma)^{\ast} = \textsc{Sign}(M,\sigma)$, and $W\big((M,\sigma)^{\ast}\big) = W(M,\sigma)^{\ast}$ in $\Z[q^{\pm 1/2}]$. 
\end{prop}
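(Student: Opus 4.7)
The plan is to read off both claims directly from the definitions; neither involves any nontrivial computation.

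For the signature equality, I would observe that $\textsc{Sign}(M,\sigma)$ is defined through the underlying planar diagram $\mathfrak{P}(M)$ via the intersection numbers $|\partial\lefty{D}_{i}\cap M|$ for $i=0,\ldots,m$. These numbers are visibly independent of $\sigma$, so passing to the conjugate — which only flips the decoration without altering $M$ — leaves $\mathfrak{P}(M)$, and hence the signature, unchanged.

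For the weight equality, I would write out the defining product
\[
W(M,\sigma) = \Bigl(\prod_{\sigma(C)=+} q^{1 - \frac{1}{2}N_{M}(C)}\Bigr)\cdot \Bigl(\prod_{\sigma(C)=-} q^{-1 + \frac{1}{2}N_{M}(C)}\Bigr),
\]
and then apply the conjugation $\sigma \mapsto \sigma'$. Since the swap exchanges the circles labelled $+$ and $-$ without touching $M$ (so each $N_{M}(C)$ is unchanged), the two products exchange roles: circles that formerly contributed $q^{1-\frac{1}{2}N_{M}(C)}$ now contribute $q^{-1+\frac{1}{2}N_{M}(C)}$, and vice versa. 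The exponents $1-\frac{1}{2}N_{M}(C)$ and $-1+\frac{1}{2}N_{M}(C)$ are negatives of one another, so this role-swap is precisely the effect of the substitution $q\mapsto q^{-1}$ applied factor by factor. Because the ring involution $p\mapsto p^{\ast}$ distributes over finite products, we obtain $W((M,\sigma)^{\ast}) = W(M,\sigma)^{\ast}$.

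There is no real obstacle; the only step worth highlighting is the antisymmetry of the exponents $\pm(1-\tfrac{1}{2}N_{M}(C))$ in the definition of $W$, which is exactly what makes the decoration-swap implement the ring involution $q\leftrightarrow q^{-1}$.
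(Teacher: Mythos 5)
Your proof is correct and matches the paper's approach: the paper simply states that the proposition ``follows directly from the definitions,'' and your write-up supplies exactly the details it leaves implicit --- in particular the key observation that the exponents $1-\tfrac{1}{2}N_{M}(C)$ and $-1+\tfrac{1}{2}N_{M}(C)$ are negatives of one another, so swapping the $+$ and $-$ circles realizes the substitution $q\mapsto q^{-1}$.
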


\noindent We extend these conjugate maps to the tensor products $\mathcal{I}_{2n_{1}} \otimes \cdots \mathcal{I}_{2n_{m}}$ by linearly extending
$$
p(q)\, I_{(L_{1}, \sigma_{1})} \otimes \cdots \otimes I_{(L_{m}, \sigma_{m})} \lra  p^{\ast}(q)\, I_{(L_{1}, \sigma_{1})^{\ast}} \otimes \cdots \otimes I_{(L_{m}, \sigma_{m})^{\ast}}
$$

\begin{prop}[Conjugation property]
Let $P$ be a planar diagram with signature $\textsc{Sign}(P)=(n_{0}; n_{1}, \ldots, n_{m})$, and $\xi \in \mathcal{I}_{2n_{1}} \otimes \cdots \otimes \mathcal{I}_{2n_{m}}$, then $Z_{P}(\xi^{\ast}) = \big(Z_{P}(\xi)\big)^{\ast}$. 
\end{prop}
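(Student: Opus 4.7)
The plan is to prove the identity on generators $\xi = I_{(L_1,\sigma_1)} \otimes \cdots \otimes I_{(L_m,\sigma_m)}$ and extend by $\Z[q^{\pm 1/2}]$-semilinearity (semilinear with respect to $p \mapsto p^*$), after which it holds on all of $\mathcal{I}_{2n_1} \otimes \cdots \otimes \mathcal{I}_{2n_m}$ by linearity of $Z_P$ combined with the fact that the extended $*$-operation is additive.

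The key structural input is the conjugation involution of the preceding proposition: the map $(M,\sigma) \mapsto (M,\sigma)^*$ is a bijection on multiply cleaved links that preserves $\mathfrak{P}(M)$, commutes with each boundary map in the sense $\partial_i((M,\sigma)^*) = \partial_i(M,\sigma)^*$, and satisfies $W((M,\sigma)^*) = W(M,\sigma)^*$. First I would record these three facts as immediate from the definitions (the boundary statement because $\partial_i$ only restricts $\sigma$; the weight statement because flipping every sign swaps the two products and is exactly the substitution $q \mapsto q^{-1}$).

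Next I would rewrite $Z_P(\xi^*)$ using its definition:
\begin{equation*}
Z_P(\xi^*) = \sum_{\substack{(M',\sigma') \in \mathcal{M}(P) \\ \partial_i(M',\sigma') = (L_i,\sigma_i)^*,\, i \geq 1}} W(M',\sigma')\, I_{\partial_0(M',\sigma')}.
\end{equation*}
The substitution $(M',\sigma') = (M,\sigma)^*$ is a bijection between the indexing set above and $\{(M,\sigma) \in \mathcal{M}(P) : \partial_i(M,\sigma) = (L_i,\sigma_i),\ i \geq 1\}$, because $(\cdot)^*$ is an involution, preserves $\mathfrak{P}$, and interchanges the two boundary conditions. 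Substituting and applying the two compatibility statements yields
\begin{equation*}
Z_P(\xi^*) = \sum_{\substack{(M,\sigma) \in \mathcal{M}(P) \\ \partial_i(M,\sigma) = (L_i,\sigma_i),\, i \geq 1}} W(M,\sigma)^*\, I_{\partial_0(M,\sigma)^*}.
\end{equation*}

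Finally, I would compare this to $(Z_P(\xi))^*$. By the definition of the extended $*$ on $\mathcal{I}_{2n_0}$, applying $*$ to $Z_P(\xi)$ conjugates each scalar coefficient and replaces each basis element $I_{\partial_0(M,\sigma)}$ by $I_{\partial_0(M,\sigma)^*}$, giving exactly the right-hand side above. The only subtlety worth flagging is that $*$ on the tensor-product side is semilinear, so the argument must be performed on pure generators before extending; this is the step that needs the most care, but it is straightforward once the bijection and the two compatibility identities are in place. No step presents a genuine obstacle: the proof is essentially a change of summation variable.
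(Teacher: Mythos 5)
Your proposal is correct and follows essentially the same route as the paper: both arguments work on generators, use the three facts that conjugation of multiply cleaved links preserves $\mathfrak{P}$, commutes with the boundary maps $\partial_i$, and conjugates the weight, and then reindex the defining sum via the involution $(M,\sigma) \mapsto (M,\sigma)^{\ast}$. Your explicit flagging of the semilinearity of $\ast$ when extending from generators is a welcome bit of extra care, but it does not change the substance of the argument.
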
 

\begin{proof} Let $\xi = I_{(L_{1}, \sigma_{1})} \otimes \cdots \otimes I_{(L_{m}, \sigma_{m})}$ be a generator of $\mathcal{I}_{2n_{1}} \otimes \cdots \otimes \mathcal{I}_{2n_{m}}$ and $(M,\sigma)$ be a decorated multiply cleaved link in $\mathcal{M}(P)$ with $\partial_{i}(M,\sigma) = (L_{i}, \sigma_{i})$ for $i \geq 0$. 
 Then $(M,\sigma)^{\ast}$ has $\mathfrak{P}(M^{\ast}) =P$. Thus conjugation is a bijection on $\mathcal{M}(P)$. In addition, $\partial_{i}(M,\sigma)^{\ast} = (L_{i}, \sigma_{i})^{\ast}$ for $i \geq 0$. Since $W((M,\sigma)^{\ast}) = W(M,\sigma)^{\ast}$ in $\Z[q^{1/2}, q^{-1/2}]$
\begin{equation*}
\sum_{\footnotesize\begin{array}{c} (M,\sigma) \in \mathcal{M}(P)\\ \partial_{i}(M,\sigma) = (L_{i},\sigma_{i})^{\ast}, i \geq 1  \end{array}\normalsize} W(M,\sigma)I_{\partial_{0}(M,\sigma)}
= \sum_{\footnotesize\begin{array}{c} (M',\sigma') \in \mathcal{M}(P) \\ \partial_{i}(M',\sigma') = (L_{i},\sigma_{i}), i \geq 1  \end{array}\normalsize} W(M',\sigma')^{\ast}I_{\partial_{0}(M',\sigma')^{\ast}}
\end{equation*}
An thus, by definition, $Z_{P}(\xi^{\ast})=\big(Z_{P}(\xi)\big)^{\ast}$.
\end{proof}

\subsection{Non-degeneracy}\label{sec:nondeg}

\begin{center}
\begin{figure}
\includegraphics[scale=0.5]{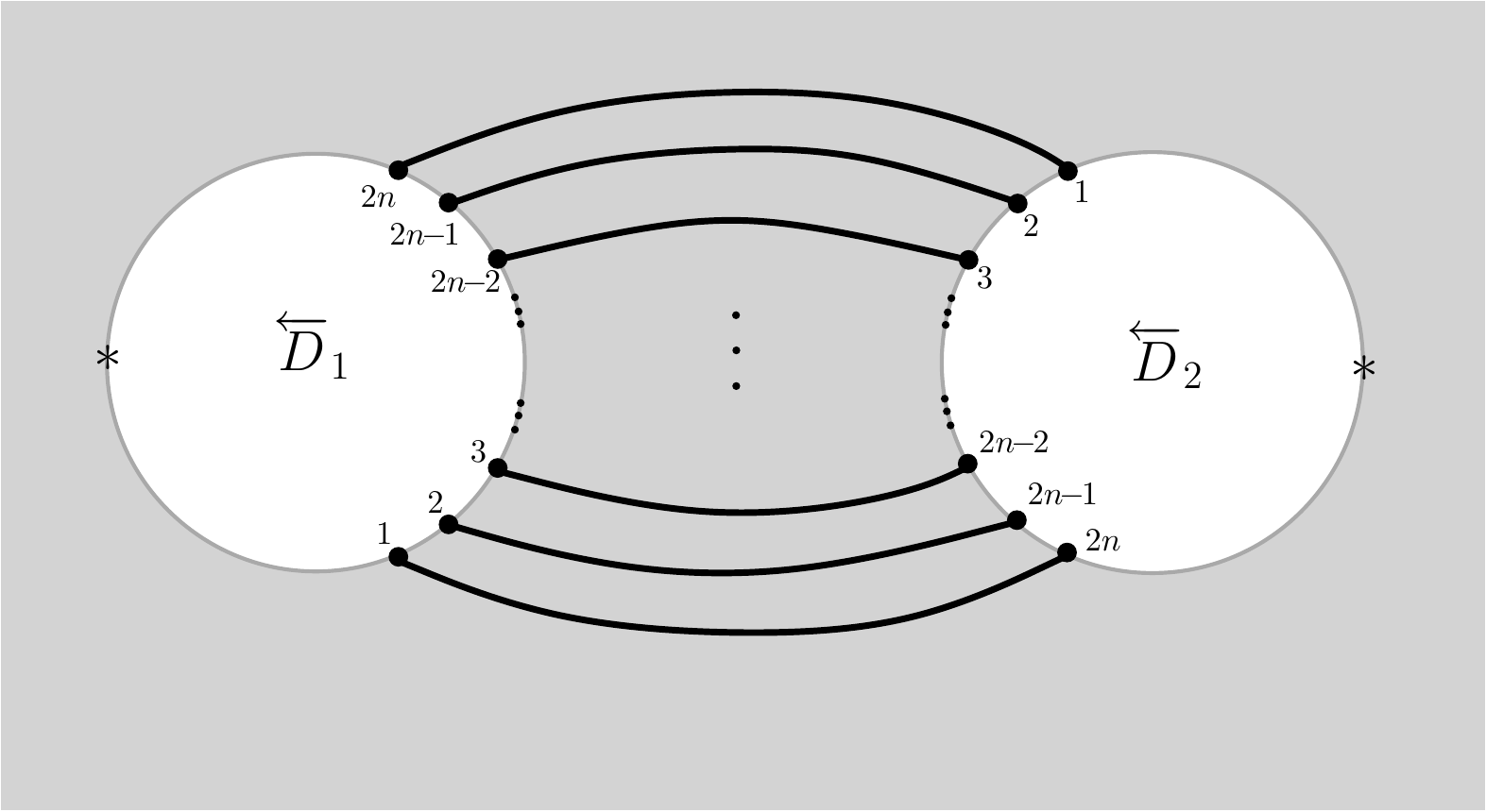}
\caption{}\label{fig:pairing}
\end{figure}
\end{center}

\noindent In this section we show that the planar algebra is non-degenerate. In fact, the pairing depicted defined by the diagram in  Figure \ref{fig:pairing} is a sum of hyperbolic pairs.

\begin{defn}
Let $(L,\sigma) \in \cleaved{n}$ be subordinate to $(\lefty{D}$. Then the {\em dual} $\overline{(L,\sigma)}$ of $(L,\sigma)$ is the cleaved link subordinate to $(\righty{D})$ obtained by changing the orientation on the sphere, but fixing the ordering of the points $L \cap \partial \lefty{D}$.
\end{defn}

\noindent For $L, L' \in \cleaved{n}$, let $\langle L, L' \rangle$ be the image of $L \otimes L'$ under the map $Z_{P}: \mathcal{I}_{2n} \otimes \mathcal{I}_{2n} \lra \Z[q^{1/2}, q^{-1/2}]$ for the diagram $P$ in Figure \ref{fig:pairing}.

\begin{prop}[Non-degeneracy]
Let $L, L' \in \cleaved{n}$, then 
$$
\langle L, L' \rangle = \left\{\begin{array}{cl} 0 & L' \neq \overline{L} \\ 1 & L' = \overline{L} \end{array} \right.
$$
\end{prop}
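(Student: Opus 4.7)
The plan is to compute $Z_P(I_L \otimes I_{L'})$ directly from the definition of the partition map. The pairing diagram $P$ in Figure \ref{fig:pairing} has signature $(0;n,n)$ and consists of $2n$ parallel arcs in the annulus between the two inner discs $\lefty{D}_1$ and $\lefty{D}_2$, joining the $k^{th}$ point on $\partial \lefty{D}_1$ to the $k^{th}$ point on $\partial \lefty{D}_2$. Since $\partial \lefty{D}_0$ is not met by $P$, the codomain is $\mathcal{I}_0 \cong \Z[q^{1/2}, q^{-1/2}]$ and the pairing is a scalar.

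First I would analyze when $\mathcal{M}_P(-;L,L')$ is non-empty. Any $(M,\sigma) \in \mathcal{M}_P(-;L,L')$ must contain, inside $\lefty{D}_1$ and $\lefty{D}_2$ respectively, exactly the inside arcs of $L$ and of $L'$. The boundary condition $\partial_1(M,\sigma) = (L,\sigma_L)$ then forces the outside matching of $\partial_1(M,\sigma)$ — obtained by composing the parallel arcs of $P$ with the inside arcs of $L'$ in $\lefty{D}_2$ — to agree with the outside matching of $L$. Under the parallel identification of $\partial \lefty{D}_1$ with $\partial \lefty{D}_2$, this says the inside matching of $L'$ equals the outside matching of $L$, which is precisely the condition $L' = \overline{L}$ at the level of undecorated cleaved links. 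If this fails, $\mathcal{M}_P(-;L,L')$ is empty and $\langle L, L' \rangle = 0$.

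Assuming $L' = \overline{L}$ geometrically, I would next argue that $M$ is uniquely determined and that its circles are in natural bijection with those of $L$: a circle of $M$ alternates between inside arcs of $L$, parallel arcs of $P$, and inside arcs of $L' = \overline{L}$ (which play the role of the outside arcs of $L$), closing up precisely when the corresponding component of $L$ does. The conditions $\partial_1(M,\sigma) = (L,\sigma_L)$ and $\partial_2(M,\sigma) = (L',\sigma_{L'})$ then force the decoration of each circle of $M$ to agree with both $\sigma_L$ and $\sigma_{L'}$ on the corresponding component, so $\mathcal{M}_P(-;L,L')$ is a singleton when $L' = \overline{L}$ as decorated cleaved links, and empty otherwise.

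Finally, I would compute $W(M,\sigma)$ for this unique multiply cleaved link. Each circle $C$ of $M$ meets both $\partial \lefty{D}_1$ and $\partial \lefty{D}_2$ (inherited from the equator crossings of the corresponding circle of $L$) but avoids $\partial \lefty{D}_0$, since $P$ lies in the annulus between the inner discs. Hence $N_M(C) = 2$ and every factor $q^{\pm(1 - \frac{1}{2}N_M(C))}$ is $q^0 = 1$, so $W(M,\sigma) = 1$. Combined with $I_{\partial_0(M,\sigma)} = I_0 = 1 \in \mathcal{I}_0$, this gives $\langle L, L' \rangle = 1$ when $L' = \overline{L}$ and $0$ otherwise. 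The main obstacle will be the matching argument in the second paragraph — verifying cleanly that composing the annulus arcs with the inside arcs of $L'$ reproduces the outside matching of $L$ — but this is essentially tautological from the parallel structure of $P$, after which the weight computation is automatic.
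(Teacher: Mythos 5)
Your proposal is correct and follows essentially the same route as the paper: identify the elements of $\mathcal{M}_P(-;L,L')$ by filling the two inner discs with inside matchings, observe that the boundary conditions force $L'$ to be the dual of $L$ (with matching decorations, so the relevant moduli set is empty or a singleton), and then note that every circle of the resulting multiply cleaved link meets exactly the two inner boundary circles, so $N_M(C)=2$ and every weight factor is $q^{\pm(1-\frac{1}{2}\cdot 2)}=1$. The paper's version is terser but rests on exactly the same two observations.
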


\begin{proof}
 To see this we compute the partition map for the diagram in Figure \ref{fig:pairing}. The diagrams in $\mathcal{M}(P)$  are obtained by picking two inside matchings $\lefty{m}_{1}$ and $\lefty{m}_{2}$ and filling in the discs $\lefty{D}_{i}$ with them. Then $\partial_{1}(P \circ_{1} \lefty{m}_{1} \circ_{2} \lefty{m}_{2}) = \lefty{m}_{1}  \# \righty{m}_{2}$ where $\righty{m}_{2}$ is the dual of  $\lefty{m}_{2}$, i.e. the same matching considered in an outside disc. On the other hand, $\partial_{2}(P \circ_{1} \lefty{m}_{1} \circ_{2} \lefty{m}_{2}) = \righty{m}_{1} \# \lefty{m}_{2}$. These boundaries are dual, so only dual cleaved links can have non-zero pairing. To see that these pair to give $1$, note that any circle, with any decoration, in the multiply cleaved link necessarily intersects both inside discs. Thus the total weight will be multiples of $q^{1 - 1/2\cdot 2} = 1$ and $q^{-1+1/2\cdot2} = 1$.
\end{proof}

\subsection{Composition of the partition maps for planar diagrams}

\noindent We now verify that the partition maps compose according to the requirements of a planar algebra.

\subsection{Gluing multiply cleaved links:} First, we will define a gluing $\circ_{i}$ for multiply cleaved links. Suppose the multiply cleaved link $(M,\sigma)$ is subordinate to $\mathbb{D}=(\lefty{D}_{0}; \lefty{D}_{1}, \ldots, \lefty{D}_{m})$ and $(N,\nu)$ is subordinate to $\mathbb{D}' = (\lefty{D}'_{0}; \lefty{D}'_{1}, \ldots, \lefty{D}'_{m'})$ with $\partial_{0}(M,\sigma) = \partial_{i}(N,\nu)$. Then $(N,\nu) \circ_{i} (M,\sigma)$ is the (equivalence class) of decorated, multiply cleaved links subordinate to $\mathbb{D}' \circ_{i} \mathbb{D}$ found by removing the disc $\lefty{D}'_{i}$ from the sphere $S_{N}$ and gluing in the disc $\lefty{D}_{0}$ so that $\ast_{\lefty{D}_{0}}$ is glued to $\ast_{\lefty{D}'_{i}}$, and $N \cap \partial \lefty{D}'_{i}$ is identified with $M \cap \lefty{D}_{0}$, in an order preserving manner. The decoration map for $(N,\nu) \circ_{i} (M,\sigma)$ is obtained by restriction of $\nu$ and $\sigma$ to the circles intersecting $\lefty{D}_{0}$ and $\righty{D}_{i}'$. Since  $\partial_{0}(M,\sigma) = \partial_{i}(N,\nu)$ circles which intersect $C = \partial\lefty{D}_{0} =\partial\lefty{D}'_{i}$ receive the same decoration from $\nu$ and $\sigma$. \\
\ \\
\noindent It is straightforward that this provides a well-defined map
\begin{equation*}
\begin{split}
\circ_{i}: \mathcal{M}_{R}(L'_{0};L'_{1},& \ldots,L'_{i-1},L,L'_{i+1}, \ldots, L'_{m'}) \times \mathcal{M}_{P}(L;L_{1}, \ldots, L_{m})\\
& \lra \mathcal{M}_{R\circ_{i} P}(L'_{0};L'_{1}, \ldots,L'_{i-1},L_{1}, \ldots, L_{m},L'_{i+1},\ldots, L'_{m'})
\end{split}
\end{equation*}

\begin{thm}\label{thm:comp}
Let $P$ and $R$ be planar diagram with signatures $\textsc{Sign}(P) = (n_{0}; n_{1}, \ldots, n_{m})$ and $\textsc{Sign}(R) = (n'_{0}; n'_{1}, \ldots, n'_{m'})$, with $n_{0}=n'_{i}$. The partition map $Z_{R\circ_{i} P}$ equals the map $Z_{R} \circ_{i} Z_{P}$ 
\end{thm}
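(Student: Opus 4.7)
The plan is to unfold both sides of the claimed identity as sums over multiply cleaved links and match them term-by-term via a cut-and-glue bijection. Let $C = \partial \lefty{D}_{0} = \partial \lefty{D}_{i}'$ denote the seam circle along which $\mathbb{D}_{R}$ and $\mathbb{D}_{P}$ are joined; this circle is a disc boundary for each of the original configurations but is no longer a disc boundary in $\mathbb{D}_{R} \circ_{i} \mathbb{D}_{P}$.

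First I would establish a bijection
$$
\mathcal{M}_{R \circ_{i} P}\bigl(L'_{0}; L'_{1}, \ldots, L'_{i-1}, L_{1}, \ldots, L_{m}, L'_{i+1}, \ldots, L'_{m'}\bigr) \;\longleftrightarrow\; \coprod_{L \in \cleaved{n_{0}}} \mathcal{M}_{R}\bigl(L'_{0}; \ldots, L, \ldots\bigr) \times \mathcal{M}_{P}(L; L_{1}, \ldots, L_{m}),
$$
whose forward direction is the gluing $\bigl((N,\nu),(M,\sigma)\bigr) \mapsto (N,\nu) \circ_{i} (M,\sigma)$ defined just above the theorem. The inverse cuts $(K,\tau)$ along $C$: the components of $K$ that intersect $C$ become the circles of $L$, with decoration inherited from $\tau$, inside matching (in the sphere $S_{P}$) recorded by $K \cap \lefty{D}_{0}$, and outside matching recorded by $K \cap \righty{D}_{0}$. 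The multiply cleaved link $(M,\sigma)$ then consists of those circles of $L$ reinterpreted in $S_{P}$ together with the components of $K$ lying entirely in $\lefty{D}_{0}$; similarly $(N,\nu)$ uses the $\righty{D}_{i}'$-side of $K$. Conditions (1)--(2) of Definition \ref{defn:multicleaved} descend to $(M,\sigma)$ and $(N,\nu)$ from the corresponding conditions on $(K,\tau)$, and the two maps are mutually inverse by construction.

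Next I would verify the weight identity $W\bigl((N,\nu) \circ_{i} (M,\sigma)\bigr) = W(M,\sigma) \cdot W(N,\nu)$. A circle of $K$ that does not cross $C$ is a circle of exactly one of $M$, $N$, with the same count of intersected disc boundaries (none being $C$), so its contribution is unchanged. A circle $C_{L} \in \circles{L}$, viewed simultaneously in $K$, $M$, and $N$, satisfies
$$
N_{K}(C_{L}) = N_{M}(C_{L}) + N_{N}(C_{L}) - 2,
$$
the subtracted $2$ because both $N_{M}$ and $N_{N}$ count the intersection with the shared circle $C$, which is no longer a disc boundary in $\mathbb{D}_{R} \circ_{i} \mathbb{D}_{P}$. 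Since the decorations of $C_{L}$ in $K$, $M$, and $N$ all agree, the exponent in the weight splits additively, $\varepsilon \bigl(1 - N_{K}(C_{L})/2\bigr) = \varepsilon \bigl(1 - N_{M}(C_{L})/2\bigr) + \varepsilon \bigl(1 - N_{N}(C_{L})/2\bigr)$, where $\varepsilon = \pm 1$ is the sign of the decoration, and multiplicativity follows.

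Finally, using that $\partial_{0}(K,\tau) = \partial_{0}(N,\nu)$ since $\lefty{D}_{0}'$ remains the outer disc of $\mathbb{D}_{R} \circ_{i} \mathbb{D}_{P}$, the defining sum factors as
$$
Z_{R \circ_{i} P}(\xi) = \sum_{L \in \cleaved{n_{0}}} \left(\sum_{(M,\sigma) \in \mathcal{M}_{P}(L; L_{1}, \ldots, L_{m})} W(M,\sigma)\right) \sum_{(N,\nu)} W(N,\nu)\, I_{\partial_{0}(N,\nu)},
$$
where the $(N,\nu)$-sum ranges over $\mathcal{M}_{R}(\_\,; L'_{1}, \ldots, L'_{i-1}, L, L'_{i+1}, \ldots, L'_{m'})$ and the parenthesized coefficient is precisely the coefficient of $I_{L}$ in $Z_{P}(I_{L_{1}} \otimes \cdots \otimes I_{L_{m}})$. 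By linearity of $Z_{R}$ this equals $Z_{R}(\cdots \otimes Z_{P}(\cdots) \otimes \cdots)$, which is $(Z_{R} \circ_{i} Z_{P})(\xi)$ by equation (\ref{eqn:composition}). The main obstacle is the weight bookkeeping in the second step: one must confirm that exactly two disc-boundary contributions collapse when an $L$-component is reread as a circle of $K$, and that all other intersection counts are unaffected by the gluing. Once this is settled, the remainder is a routine reorganization of sums.
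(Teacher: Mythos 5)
Your proposal is correct and matches the paper's argument essentially step for step: the paper also isolates the cut-and-glue bijection together with the weight multiplicativity as a single lemma (Lemma \ref{prop:mclcomp}), verifies the exponent identity via exactly your count $N_{K} = N_{M} + N_{N} - 2$ for circles crossing the seam, and then reorganizes the double sum into the sum defining $Z_{R\circ_{i}P}$. No substantive differences to report.
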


\begin{proof} As both maps are linear over $\Z[q^{1/2},q^{-1/2}]$ it suffices to check the result on generators $$\xi = I_{L'_{1}}\!\otimes \cdots \otimes\!I_{L'_{i-1}}\!\otimes\!I_{L_{1}}\!\otimes\cdots\otimes\!I_{L_{m}}\!\otimes\!I_{L'_{i+1}}\!\otimes\cdots\otimes\!I_{L'_{m'}}$$ 
of the tensor product that is their common domain:
$$\mathcal{I}_{2n'_{1}}\!\otimes\cdots\otimes\!\mathcal{I}_{2n'_{i-1}}\!\otimes\!\mathcal{I}_{2n_{1}}\!\otimes\cdots\otimes\!\mathcal{I}_{2n_{m}}\!\otimes\!\mathcal{I}_{2n'_{i+1}}\!\otimes\cdots\otimes\!\mathcal{I}_{2n'_{m'}}$$
Thus, we will show that $Z_{R\circ_{i} P}(\xi)$ equals $\big(Z_{R} \circ_{i} Z_{P}\big)(\xi)$. 
\begin{equation}\label{eqn:compI}
\big(Z_{R} \circ_{i} Z_{P}\big)(\xi) = Z_{R}(I_{L'_{1}}\!\otimes \cdots \otimes\!I_{L'_{i-1}}\!\otimes\!Z_{P}(I_{L_{1}}\!\otimes\cdots\otimes\!I_{L_{m}})\!\otimes\!I_{L'_{i+1}}\!\otimes\cdots\otimes\!I_{L'_{m'}})\\
\end{equation}
Using linearity and the definitions of $Z_{R}$ and $Z_{P}$, we see that $\big(Z_{R} \circ_{i} Z_{P}\big)(\xi) = \sum_{L' \in \cleaved{n'_{0}}} W_{L'}I_{L'}$ where
\begin{equation}\label{eqn:compIII}
W_{L'} = \sum_{L \in \cleaved{n_{0}}} \sum_{\footnotesize\begin{array}{c}\big((N,\nu), (M,\sigma)\big) \in \mathcal{M}(L';L'_{1},\ldots, L, \ldots, L'_{m}) \times \\ \mathcal{M}(L;L_{1}, \ldots, L_{m})\end{array}\normalsize} W(N,\nu)\cdot W(M,\sigma)
\end{equation}
However, we will show that
\begin{lemma}\label{prop:mclcomp}
The composition $\circ_{i}$ induces a bijection
\begin{equation*}
\begin{split}
\Psi: \bigcup_{L \in \cleaved{n}}\big(\mathcal{M}_{R}(L';&L'_{1}, \ldots,L'_{i-1},L,L'_{i+1}, \ldots, L'_{m'}) \times \mathcal{M}_{P}(L;L_{1}, \ldots, L_{m})\big) \\
&\lra \mathcal{M}_{R\circ_{i} P}(L';L'_{1}, \ldots,L'_{i-1},L_{1}, \ldots, L_{m},L'_{i+1},\ldots, L'_{m'})
\end{split}
\end{equation*}
such that $W\big(N \circ_{i} M) = W(N,\nu)\cdot W(M,\sigma)$.
\end{lemma}

\noindent Then the double sum in equation \eqref{eqn:compIII} is a sum over the elements of the union in lemma \ref{prop:mclcomp}, and, assuming the lemma, we conclude that
$$
W_{L'} = \sum_{\footnotesize\begin{array}{c}(D,\eta) \in \\ \mathcal{M}_{R\circ_{i} P}(L';L'_{1}, \ldots,L'_{i-1},L_{1}, \ldots, L_{m},L'_{i+1},\ldots, L'_{m'})\end{array}\normalsize} W(D,\eta)
$$
 which is the coefficient of $I_{L'}$ in $Z_{R \circ_{i} P}(\xi)$ . Thus, $\big(Z_{R} \circ_{i} Z_{P}\big)(\xi) = Z_{R \circ_{i} P}(\xi)$.\\
\ \\
\noindent We now prove the lemma.

\begin{proof}[Proof of lemma \ref{prop:mclcomp}]

\noindent We start by verifying that $\circ_{i}$ is weight preserving. Suppose $(M,\sigma) \in  \mathcal{M}_{P}(L_{0};L_{1}, \ldots, L_{m})$ and $(N,\nu) \in \mathcal{M}_{R}(L'_{0};L'_{1}, \ldots, L'_{m'})$, with $L_{0}=L'_{i}$. Then
$$
W(N \circ_{i} M) = W(N,\nu) \cdot W(M,\sigma)
$$
in $\Z[q^{1/2}, q^{-1/2}]$. Circles in $\circles{N \circ_{i} M}$ either intersect $C = \partial \lefty{D}_{0}$ or they do not. It is straightforward from the definition of $W(N \circ_{i} M)$ that circles not intersecting $C$ contribute the same factor $\pm q^s$ to $W(N \circ_{i} M)$ as they do to $W(M,\sigma) \cdot W(N,\nu)$. Now let $K$ be a circle which intersects $C$. Suppose $K$ intersects $\lefty{A}$ discs of $R \circ_{i} T$ in $\lefty{D}_{0}$ and $\righty{A}$ discs of $R \circ_{i} T$ in $\righty{D}_{i}'$. The corresponding circle in $(M,\sigma)$ intersects $\lefty{A}+1$ discs: the same $\lefty{A}$ discs in $\lefty{D}_{0}$ and the disc $\righty{D}_{0}$. while the corresponding circle in $(N,\nu)$ intersects $\righty{A}+1$ discs. Thus, $C$ contributes $q^{\pm(1 - \frac{1}{2}(\lefty{A} + \righty{A}))}$ to $W(N \circ_{i} M)$, while the corresponding circles contribute $q^{\pm(1 - \frac{1}{2}(\lefty{A} + 1))}$ to $W(M,\sigma)$ and $q^{\pm(1 - \frac{1}{2}(\righty{A} + 1))}$ to $W(N,\nu)$. Since
$$
q^{\pm(1 - \frac{1}{2}(\lefty{A} + \righty{A}))} = q^{\pm(1 - \frac{1}{2}(\lefty{A} + 1))}q^{\pm(1 - \frac{1}{2}(\righty{A} + 1))}
$$
{\em every} circle in $N \circ M$ contributes the same factor in $W(N \circ_{i} M)$ as in $W(N,\nu) \cdot W(M,\sigma)$, so $W(N \circ_{i} M) = W(N,\nu) \cdot W(M,\sigma)$ . \\
\ \\
\noindent Next we show that the map  $\Psi$ induced by $\circ_{i}$ is a bijection by finding an inverse 
\begin{equation*}
\begin{split}
\Phi : \mathcal{M}_{R\circ_{i} P}&(L'_{0};L'_{1}, \ldots,L'_{i-1},L_{1}, \ldots, L_{m},L'_{i+1},\ldots, L'_{m'}) \\
&\lra \bigcup_{L \in \cleaved{n}}\big(\mathcal{M}_{R}(L'_{0};L'_{1}, \ldots,L'_{i-1},L,L'_{i+1}, \ldots, L'_{m'}) \times \mathcal{M}_{P}(L;L_{1}, \ldots, L_{m})\big)
\end{split}
\end{equation*}

\noindent Let $(D,\eta) \in \mathcal{M}(R \circ_{i} P)(L'_{0};L'_{1}, \ldots,L'_{i-1},L_{1}, \ldots, L_{m},L'_{i+1},\ldots, L'_{m'})$ subordinate to $\mathbb{D}' \circ_{i} \mathbb{D}$ where $\mathbb{D}'=(\lefty{D}_{0}; \lefty{D}'_{1}, \ldots, \lefty{B}, \ldots, \lefty{D}'_{m'})$ and $\mathbb{D} = (\lefty{B};\lefty{D}_{1}, \ldots,\lefty{D}_{m})$. Let $C = \partial \lefty{B}$ and $\ast_{C}$ be the marked point from $\mathbb{D}$ (which is identical to that from $\mathbb{D}'$).
Define $(L,\eta)$ to be the cleaved link with equator $C$, marked point $\ast_{C}$, circles consisting of $\{U \in \circles{N \circ_{i} M}| U \cap C \neq \emptyset\}$, equipped with the decoration map found from restriction from $\eta$. \\
\ \\
Define a multiply cleaved link $(M,\sigma)$ in $S$ by taking $\circles{M} = \{U \in \circles{D}| U \cap \lefty{B} \neq \emptyset\}$ and the decoration $\sigma$ found by restricting $\eta$ to the subset $\circles{M} \subset \circles{D}$. This defines a multiply cleaved link which, given the identification of $\lefty{B}$  with $\lefty{D}_{0}$, is a decorated, multiply cleaved link in $\mathcal{M}(P)$. It is straightforward to verify that $L = \partial_{0}(M,\sigma)$ and $L_{i} = \partial_{i}(M,\sigma)$. Thus, $(M,\sigma) \in \mathcal{M}_{P}(L;L_{1}, \ldots, L_{m})$. Similarly, the subset of $\circles{D}$ defined by $\{U \in \circles{D}| U \cap \righty{B} \neq \emptyset\}$, with the restricted decorated. defines a multiply cleaved link $N$ in $\mathcal{M}_{R}(L'_{0};L'_{1}, \ldots,L,\ldots L'_{m'})$. Therefore,  $\big((N,\nu),(M,\sigma)\big)$ is an element of  
$$
\big(\mathcal{M}_{R}(L'_{0};L'_{1}, \ldots,L'_{i-1},L,L'_{i+1}, \ldots, L'_{m'}) \times \mathcal{M}_{P}(L;L_{1}, \ldots, L_{m})\big)
$$
If we define $\Phi(D,\eta) = \big((N,\nu),(M,\sigma)\big)$ we have a well-defined map as in the statement of the lemma. That $\Phi$ is an inverse of the map $\Psi$ follows from the definitions. 
\end{proof}
\end{proof}

\section{Invariants for oriented tangles}\label{sec:reduction}

\noindent An oriented tangle diagrams $T$ is subordinate to a disc configuration $\mathbb{D}$ if it is drawn in $\Sigma_{\mathbb{D}}$ and intersects each boundary component in an even number of points. Each tangle diagram defines an oriented tangle in $\Sigma_{\mathbb{D}} \times [-1,1]$ up to isotopies which fix the points in $\partial \Sigma_{\mathbb{D}} \times \{0\}$, and any diagrams $T_{1}$ and $T_{2}$ are related by sequences of Reidemeister moves performed in $\mathrm{int}\,\Sigma_{\mathbb{D}}$ if and only if they define isotopic tangles. We will denote the number of positive/negative crossings in $T$ (with reference to the orientation on $\Sigma_{\mathbb{D}}$) by $n_{\pm}(T)$.  \\
\ \\
\noindent We can construct a partition map $Z_{T}$ for a tangle diagram using the state sum approach to the Kauffman bracket. In particular $Z_{T}$ is a linear combination of the maps for the resolution diagrams of $T$.\\ 

\begin{defn}\label{defn:resolutions}
A resolution $\rho$ of $T$ is a map $\rho: \cross{T} \lra \{0,1\}$. For each resolution, $\rho$, there is a planar diagram, $\rho(T)$, called a resolution diagram, obtained by locally replacing {\rm (}disjoint{\rm )} neighborhoods of the crossings of $T$ using the following rule for each crossing $c \in \cross{T}$:\\
$$
\inlinediag[0.3]{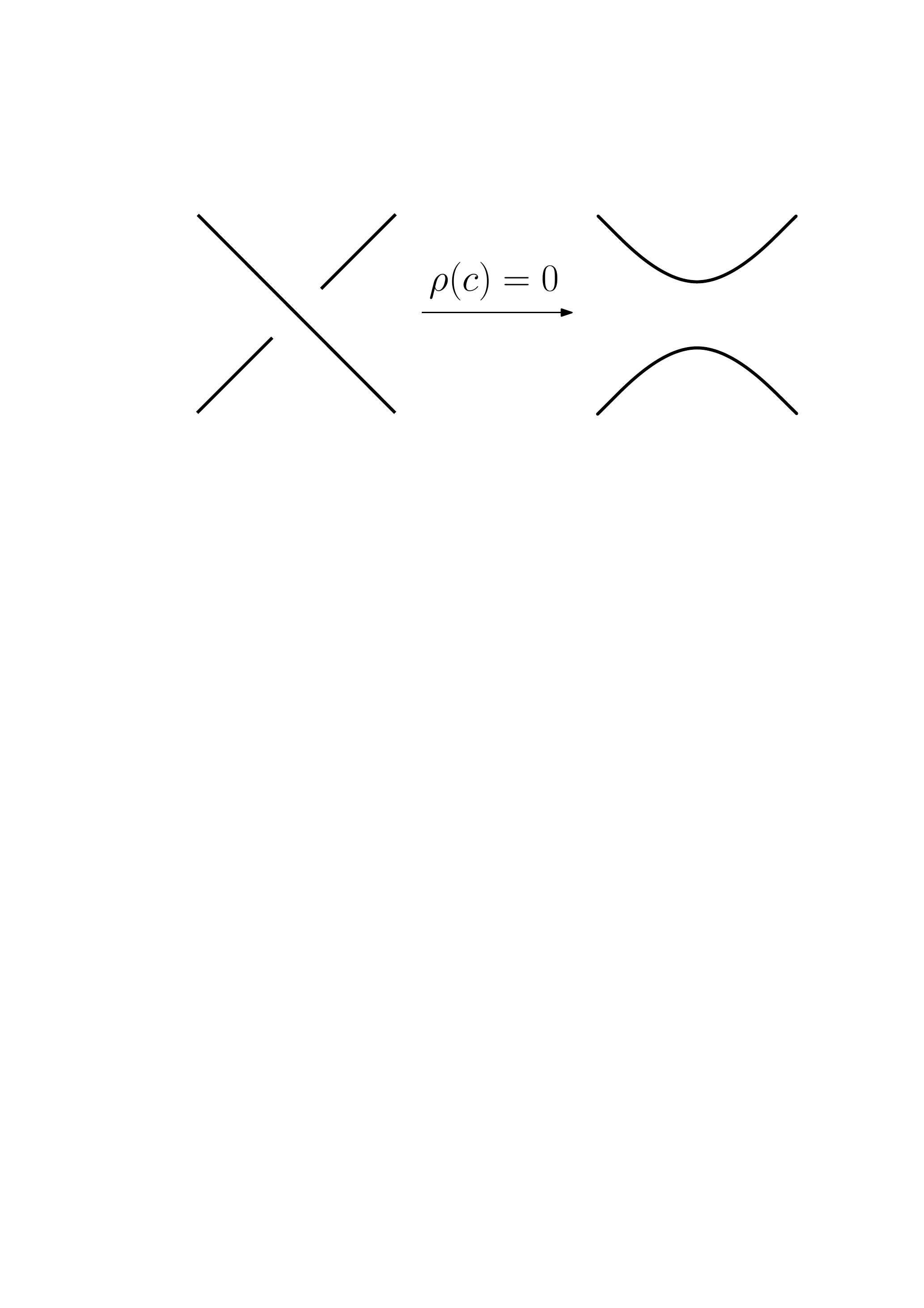} \hspace{1in} \inlinediag[0.3]{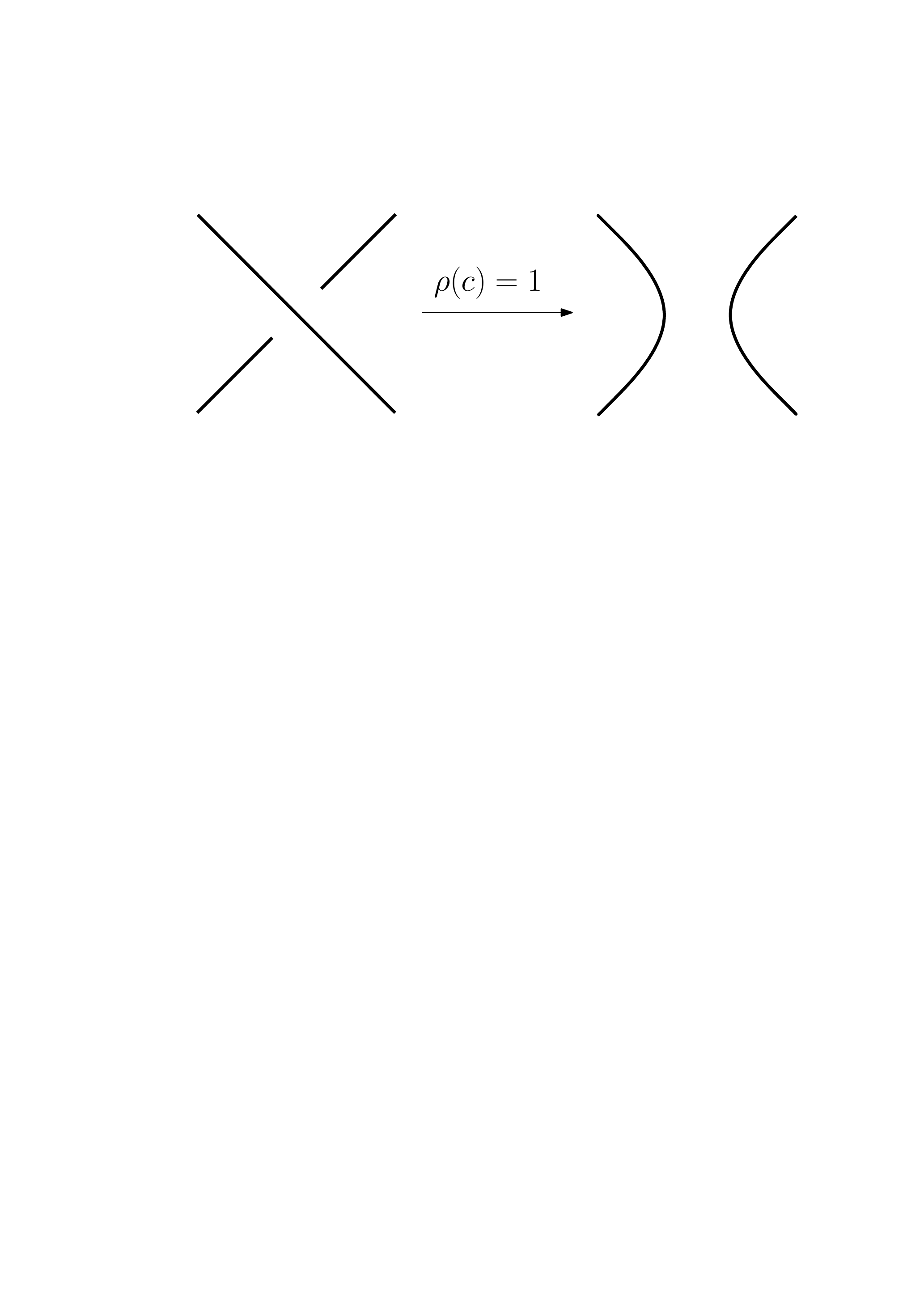} 
$$ 
\noindent The set of resolutions will be denoted $\resolution{T}$. The {\rm (}unshifted{\rm )} homological dimension of $\rho \in \resolution{T}$ is 
$$h(\rho) = \sum_{c \in \cross{T}} \rho(c)$$
\end{defn}
\ \\
\noindent We then define $Z_{T}$ by using the partition maps for the resolutions:\\

\begin{defn}\label{defn:unpart}
Let $T$ be an oriented tangle diagram subordinate to a disc configuration $\mathbb{D}$ with signature $\textsc{Sign}(T)=(n_{0}; n_{1}, \ldots, n_{m})$. The partition map 
$$
Z_{T} :  \mathcal{I}_{2n_{1}}\!\otimes\!\mathcal{I}_{2n_{2}}\!\otimes \cdots \otimes\!\mathcal{I}_{2n_{m}} \lra \mathcal{I}_{2n_{0}}
$$
is defined to be the map 
\begin{equation}
Z_{T} = (-1)^{n_{-}(T)}q^{(n_{+}-2n_{-})(T)} \widetilde{Z}_{T}
\label{eqn:extension}
\end{equation}
where the unshifted partition map $\widetilde{Z}_{T}$ is defined by the state sum
$$
\widetilde{Z}_{T} = \sum_{\rho \in \resolution{T}} (-q)^{h(\rho)} Z_{\rho(T)}
$$
\end{defn}

\noindent Since $Z_{P}$ satisfies the Temperley-Lieb property (\ref{item:tempLieb}) standard linear algebra shows that $Z_{T}$ does as well. Composition of tangle diagrams is defined identically to that for planar diagrams with the added requirement that when $T$ and $R$ are oriented tangle diagrams,  $R \circ_{i} T$ has an orientation which restricts to the original orientations on $T$ and $R$. Standard linear algebra and bookkeeping of resolutions/crossing numbers then implies

\begin{prop}
Let $T$ and $R$ be oriented tangles subordinate to $\mathbb{D}_{T}$ and $\mathbb{D}_{R}$  such that $R \circ_{i} T$ is defined as an oriented tangle. Then 
$$
Z_{R \circ_{i} T} = Z_{R} \circ_{i} Z_{T}
$$
\end{prop}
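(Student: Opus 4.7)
The plan is to reduce the claim to Theorem \ref{thm:comp} term-by-term in the Kauffman state sum, using that crossings of a composition live in disjoint regions.

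First I would observe that the crossings of $R \circ_{i} T$ decompose canonically: under the gluing, the neighborhoods of crossings of $T$ are disjoint from those of $R$, so there is a natural bijection $\cross{R \circ_{i} T} \cong \cross{R} \sqcup \cross{T}$ and consequently a bijection $\resolution{R \circ_{i} T} \cong \resolution{R} \times \resolution{T}$. Under this identification, if $\rho \leftrightarrow (\rho_{R}, \rho_{T})$, then the resolution diagram satisfies $\rho(R \circ_{i} T) = \rho_{R}(R) \circ_{i} \rho_{T}(T)$ as planar diagrams, since the local replacement in Definition \ref{defn:resolutions} is performed inside each crossing's neighborhood and the gluing happens far from any crossing. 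Moreover the (unshifted) homological dimension is additive: $h(\rho) = h(\rho_{R}) + h(\rho_{T})$. Similarly, the positive and negative crossing counts are additive: $n_{\pm}(R \circ_{i} T) = n_{\pm}(R) + n_{\pm}(T)$, so the global sign and $q$-shift in equation \eqref{eqn:extension} factor as a product of the corresponding factors for $R$ and $T$.

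Next I would apply Theorem \ref{thm:comp} to each pair of resolutions. For any fixed $(\rho_{R}, \rho_{T})$, both $\rho_{R}(R)$ and $\rho_{T}(T)$ are planar diagrams, and their composition is defined because the signatures match (the signature of a resolution diagram agrees with that of the tangle). Theorem \ref{thm:comp} therefore gives
$$
Z_{\rho_{R}(R) \circ_{i} \rho_{T}(T)} = Z_{\rho_{R}(R)} \circ_{i} Z_{\rho_{T}(T)}.
$$
Plugging these into the state sum for $\widetilde{Z}_{R \circ_{i} T}$ and using that $\circ_{i}$ (as an operation on multilinear maps) is multilinear in each tensor slot, I can factor the double sum:
$$
\widetilde{Z}_{R \circ_{i} T} = \sum_{(\rho_{R}, \rho_{T})} (-q)^{h(\rho_{R}) + h(\rho_{T})}\, Z_{\rho_{R}(R)} \circ_{i} Z_{\rho_{T}(T)} = \widetilde{Z}_{R} \circ_{i} \widetilde{Z}_{T}.
$$

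Finally I would multiply through by the sign and $q$-shift factors. By the additivity of $n_{\pm}$ observed above,
$$
(-1)^{n_{-}(R\circ_{i}T)} q^{(n_{+} - 2n_{-})(R \circ_{i} T)} = \big[(-1)^{n_{-}(R)} q^{(n_{+}-2n_{-})(R)}\big] \cdot \big[(-1)^{n_{-}(T)} q^{(n_{+}-2n_{-})(T)}\big],
$$
so combining with the identity $\widetilde{Z}_{R \circ_{i} T} = \widetilde{Z}_{R} \circ_{i} \widetilde{Z}_{T}$ yields $Z_{R \circ_{i} T} = Z_{R} \circ_{i} Z_{T}$. The only real content here is the Composition Theorem applied to planar diagrams; the rest is bookkeeping, and I anticipate no genuine obstacle beyond verifying carefully that the natural bijection on crossings respects the local replacement rule so that $\rho(R \circ_{i} T) = \rho_{R}(R) \circ_{i} \rho_{T}(T)$ on the nose.
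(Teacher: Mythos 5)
Your proof is correct and is precisely the ``standard linear algebra and bookkeeping of resolutions/crossing numbers'' that the paper invokes without writing out: the paper also reduces the statement to the Composition Theorem for planar diagrams (Theorem \ref{thm:comp}) applied resolution-by-resolution, with the additivity of $h(\rho)$ and $n_{\pm}$ handling the $(-q)^{h(\rho)}$ weights and the global sign/$q$-shift. No gaps; your write-up simply makes the intended argument explicit.
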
 


\noindent Similarly the conjugation property  $Z_{P}(\xi^{\ast}) = Z_{P}(\xi)^{\ast}$ for planar diagrams, along with resolution/crossing number considerations, implies 

\begin{prop}
Suppose $T$ is a tangle diagram subordinate to $\mathbb{D}$, and let $T^{\ast}$ be the mirror tangle diagram subordinate to $\mathbb{D}$. Then $Z_{T^{\ast}}(\xi^{\ast}) = \big(Z_{T}(\xi)\big)^{\ast}$
\end{prop}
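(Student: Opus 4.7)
The plan is to reduce everything to the already-established conjugation property for planar diagrams ($Z_{P}(\xi^{\ast}) = Z_{P}(\xi)^{\ast}$) and then carefully track the sign and $q$-power normalization factors in Definition \ref{defn:unpart}. The key observation is that the mirror operation on a tangle diagram swaps the roles of $0$-resolutions and $\infty$-resolutions at each crossing, while leaving the underlying planar diagrams of resolutions unchanged as unordered objects in $\Sigma_{\mathbb{D}}$.

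First I would set up the bijection on resolutions. For $T$ and $T^{\ast}$ there is a canonical identification $\cross{T} = \cross{T^{\ast}}$ induced by the mirror. Define $\iota : \resolution{T} \lra \resolution{T^{\ast}}$ by $\iota(\rho)(c) = 1 - \rho(c)$; then by inspection of the two rules in Definition \ref{defn:resolutions}, the planar diagrams satisfy $\iota(\rho)(T^{\ast}) = \rho(T)$, and $h(\iota(\rho)) = n - h(\rho)$ where $n = |\cross{T}| = n_{+}(T) + n_{-}(T)$. Since the mirror swaps positive and negative crossings, we also have $n_{\pm}(T^{\ast}) = n_{\mp}(T)$.

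Next I would compute both sides. Applying the planar-diagram conjugation property to each $Z_{\rho(T)}$ gives $Z_{\rho(T)}(\xi^{\ast}) = Z_{\rho(T)}(\xi)^{\ast}$. Thus
\begin{equation*}
\widetilde{Z}_{T^{\ast}}(\xi^{\ast}) = \sum_{\rho \in \resolution{T}} (-q)^{n - h(\rho)}\, Z_{\rho(T)}(\xi)^{\ast},
\end{equation*}
while on the other hand, using that conjugation sends $-q$ to $-q^{-1}$ and is additive,
\begin{equation*}
\widetilde{Z}_{T}(\xi)^{\ast} = \sum_{\rho \in \resolution{T}} (-q^{-1})^{h(\rho)}\, Z_{\rho(T)}(\xi)^{\ast}.
\end{equation*}
Since $(-q)^{n - h(\rho)} = (-q)^{n}\cdot (-q^{-1})^{h(\rho)}$, the two sums differ only by the scalar $(-q)^{n}$.

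Finally I would match the prefactors in \eqref{eqn:extension}. Comparing
\begin{equation*}
Z_{T^{\ast}}(\xi^{\ast}) = (-1)^{n_{+}(T)} q^{(n_{-} - 2n_{+})(T)}\, \widetilde{Z}_{T^{\ast}}(\xi^{\ast})
\end{equation*}
with
\begin{equation*}
Z_{T}(\xi)^{\ast} = (-1)^{n_{-}(T)} q^{-(n_{+} - 2n_{-})(T)}\, \widetilde{Z}_{T}(\xi)^{\ast},
\end{equation*}
and using the identities $2n_{+}(T) + n_{-}(T) \equiv n_{-}(T) \pmod 2$ and $n + n_{-}(T) - 2n_{+}(T) = -(n_{+}(T) - 2n_{-}(T))$, I expect the extra factor $(-q)^{n}$ from the resolution sum to be exactly absorbed into the difference of prefactors. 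The only potentially subtle step is checking the sign and $q$-exponent bookkeeping cleanly, but once these three elementary identities are verified the equality $Z_{T^{\ast}}(\xi^{\ast}) = Z_{T}(\xi)^{\ast}$ follows immediately on generators and extends by linearity.
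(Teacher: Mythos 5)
Your proposal is correct and follows exactly the route the paper intends: the paper gives no detailed argument, merely asserting that the result follows from the conjugation property for planar diagrams ``along with resolution/crossing number considerations,'' and your bijection $\rho \mapsto \iota(\rho)$ with the prefactor bookkeeping $(-1)^{n_{+}}(-q)^{n}q^{n_{-}-2n_{+}} = (-1)^{n_{-}}q^{2n_{-}-n_{+}}$ is precisely the content of those considerations. The computations check out, so this is a correct filling-in of the paper's sketch rather than a different approach.
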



\noindent In addition,  that unshifted partition maps $\widetilde{Z}_{T}$, while {\em not} isotopy invariants of the tangle, do satisfy a skein relation:

\begin{prop}[Unshifted skein relations]\label{prop:skein}
Let $T$ be an unoriented tangle diagram. Suppose $c \in \cross{T}$, and let $T_{0}$ and $T_{1}$ be the tangle diagrams obtained from performing the $0$ and $1$ resolutions locally at $c$. Then 
$$
\widetilde{Z}_{T} = \widetilde{Z}_{T_{0}} - q \cdot \widetilde{Z}_{T_{1}} 
$$
\end{prop}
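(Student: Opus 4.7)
The plan is to split the defining state sum for $\widetilde{Z}_{T}$ according to the value taken by the resolution at the distinguished crossing $c$, and then recognize each piece as the state sum for $T_{0}$ or $T_{1}$.

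Concretely, I would first observe that there is a canonical bijection
\[
\resolution{T} \;\longleftrightarrow\; \resolution{T_{0}} \sqcup \resolution{T_{1}},
\]
sending a resolution $\rho$ with $\rho(c)=0$ to its restriction to $\cross{T}\setminus\{c\}=\cross{T_{0}}$, and similarly for $\rho(c)=1$. Under this bijection the resolution diagram $\rho(T)$ agrees, as a planar diagram subordinate to the same disc configuration, with $\rho'(T_{0})$ or $\rho'(T_{1})$ respectively, where $\rho'$ is the restriction. This is immediate from Definition~\ref{defn:resolutions}, since the local replacement at $c$ is performed in a disjoint neighborhood and does not affect the choices or effects at the other crossings.

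Next I would track the exponent $h(\rho)$. If $\rho(c)=0$ then $h(\rho)=h(\rho')$, while if $\rho(c)=1$ then $h(\rho)=h(\rho')+1$, since the single variable $\rho(c)$ is the only difference in the sum defining $h$. Substituting into the state sum gives
\[
\widetilde{Z}_{T} = \sum_{\rho'\in\resolution{T_{0}}} (-q)^{h(\rho')}Z_{\rho'(T_{0})} + \sum_{\rho'\in\resolution{T_{1}}} (-q)^{h(\rho')+1}Z_{\rho'(T_{1})},
\]
and factoring $(-q)$ out of the second sum recovers $\widetilde{Z}_{T_{0}} - q\cdot\widetilde{Z}_{T_{1}}$, which is what we want.

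There is no real obstacle here: the statement is essentially the tautological decomposition of the state sum into the two branches at a single crossing, and it is the direct analogue of the usual Kauffman bracket skein relation lifted to the level of partition maps. The only thing to be careful about is that the bijection of resolutions genuinely identifies resolution diagrams and preserves the disc configuration (so that $Z_{\rho(T)}$ and $Z_{\rho'(T_{i})}$ really coincide as maps between the same tensor products), which follows because $T$, $T_{0}$, and $T_{1}$ share the same ambient disc configuration and the same boundary intersection pattern.
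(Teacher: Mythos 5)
Your proposal is correct and is exactly the argument the paper gives (as a proof sketch): split the state sum over resolutions according to the value of $\rho(c)$, identify each branch with the state sum for $T_{0}$ or $T_{1}$, and factor out the extra $(-q)$ coming from $h(\rho)=h(\rho')+1$ in the $\rho(c)=1$ branch. Your version just fills in the bookkeeping more explicitly than the paper does.
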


\begin{proof}[Proof sketch.] Group the terms in the summation in Definition \ref{defn:unpart} into a summand over resolutions with with $\rho(c)=0$ and one with resolutions with $\rho(c) = 1$. The former summand equals $\widetilde{Z}_{T_{0}}$ while the second summand is $-q$ times $\widetilde{Z}_{T_{1}}$. 
\end{proof}  

\noindent Using this skein relation, the Temperley-Lieb, and  the composition properties for $Z_{T}$, it then follows that $Z_{T}$ is an isotopy invariant of the oriented tangles:

\begin{thm}\label{thm:invariance}
If $T_{1}$ and $T_{2}$ are tangle diagrams subordinate to a disc configuration $\mathbb{D}$, and which differ by a Reidemeister move in a disc in the interior of $\Sigma_{\mathbb{D}}$, then $Z_{T_{1}} = Z_{T_{2}}$.
\end{thm}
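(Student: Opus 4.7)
The plan is to emulate the classical Kauffman bracket invariance argument, adapted to the planar algebra setting via the composition, Temperley-Lieb, and unshifted skein properties already established above.

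First, I will reduce to a local statement. Given the Reidemeister move disc $B \subset \mathrm{int}\,\Sigma_{\mathbb{D}}$ containing all crossings involved in the move, refine the disc configuration by adjoining $B$ as an additional inside disc. Each $T_j$ then decomposes as $T_j = S \circ_k R_j$, where $S$ is the common outside tangle diagram on $\Sigma_{\mathbb{D}} \setminus \mathrm{int}\,B$ and $R_1, R_2$ are the local tangle diagrams in $B$ that differ by the move. By the composition property for tangle diagrams (the proposition preceding the theorem), $Z_{T_j} = Z_S \circ_k Z_{R_j}$, so it suffices to prove the local equality $Z_{R_1} = Z_{R_2}$.

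Next I will verify this local equality for each of the three Reidemeister moves. Iterating the skein relation of Proposition \ref{prop:skein}, each $\widetilde{Z}_{R_j}$ expands into a $\Z[q^{\pm 1/2}]$-linear combination of partition maps for the planar resolution diagrams of $R_j$. Whenever a resolution produces a free interior circle, the Temperley-Lieb property replaces the corresponding partition map by $(q + q^{-1})$ times the partition map of the diagram with that circle removed. Applying this expansion on both sides of a Reidemeister move reproduces the classical Kauffman bracket identities: for R2 the four resolutions collapse to a single common term after the expected cancellation, and for R1 the unshifted partition map acquires a monomial factor from the extra crossing together with a free circle. Since R2 and R3 preserve both $n_+$ and $n_-$, the prefactor $(-1)^{n_-} q^{n_+ - 2n_-}$ agrees on the two sides, so invariance of $\widetilde{Z}$ upgrades to invariance of $Z$. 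For R1, the prefactor shift induced by the change in $n_\pm$ exactly absorbs the monomial factor picked up by $\widetilde{Z}$, again giving $Z_{R_1} = Z_{R_2}$.

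The main obstacle I anticipate is the R3 move, which requires a genuinely multi-crossing resolution argument whose eight-plus-eight resolutions do not pair up in an obvious way. The strategy I will use there is the classical one: resolve one well-chosen crossing on both sides via Proposition \ref{prop:skein} and reduce the remaining equality to two instances of R2 invariance stacked in the plane, which is available from the previous step together with the composition property. Once all three moves are verified locally, the reduction from the first paragraph propagates the invariance to any Reidemeister move performed in $\mathrm{int}\,\Sigma_{\mathbb{D}}$, completing the proof.
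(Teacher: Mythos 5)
Your proposal is correct and follows essentially the same route as the paper: the paper's proof likewise invokes the composition property to localize the Reidemeister move to a disc, and then applies the skein relation together with the Temperley--Lieb property so that ``the usual diagrammatic arguments for invariance of the Jones polynomial apply.'' Your write-up simply makes explicit the standard local R1/R2/R3 calculations (including the cancellation of the $(-1)^{n_-}q^{n_+-2n_-}$ prefactor shift for R1) that the paper leaves to the reader.
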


\noindent To prove Theorem \ref{thm:invariance}, we use the composition property to restrict to the disc where the Reidemeister move occurs and perform a local calculation. Since $\widetilde{Z}_{T}$ satisfies a skein relation and $Z_{U}$ is multiplication by $q + q^{-1}$, the usual diagrammatic arguments for invariance of the Jones polynomial apply.\\
\ \\
\noindent Furthermore, the partition maps $Z_{T}$ generalize Khovanov's version of the Jones polynomial, \cite{Khov}. Let $L$ be an oriented link diagram. Khovanov's version of the Jones polynomial is defined by 
\begin{equation}\label{eqn:Jones}
J_{L}(q) = (-1)^{n_{-}(L)}q^{(n_{+}-2n_{-})(L)}\widetilde{J}_{L}(q)
\end{equation}
where $\widetilde{J}_{L}(q)$ satisfies the following (unoriented) skein relations
$$
\begin{array}{l}
\widetilde{J}_{U}(q) = q + q^{-1} \\
\widetilde{J}_{L \sqcup U}(q) = (q + q^{-1}) \widetilde{J}_{L} \\
\widetilde{J}_{L} = \widetilde{J}_{L_{0}} - q \widetilde{J}_{L_{1}} \\
\end{array}
$$ 
where $U$ denotes an unknot, $L \sqcup U$ is a link with an unlinked and unknotted component $U$, and $L_{0}$ and $L_{1}$ are the $0$ and $1$ resolutions at a crossing of $L$ (see definition \ref{defn:resolutions}). This version of the Jones polynomial arises as the {\em decategorification} of Khovanov homology, \cite{Khov}.

\begin{thm}[Normalization Property]\label{item:normalization}
 If $T$ has signature $(0;0)$ then $T$ comes from a link diagram $L$, and $Z_{T}: \Z[q^{1/2}, q^{-1/2}] \lra \Z[q^{1/2}, q^{-1/2}]$ is multiplication by $J_{L}(q)$.
\end{thm}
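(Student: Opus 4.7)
Since $\textsc{Sign}(T) = (0;0)$, both the domain and codomain of $Z_T$ are copies of $\mathcal{I}_0 = \Z[q^{1/2}, q^{-1/2}]$, so $Z_T$ is automatically multiplication by some element of this ring. The plan is to read this element directly off the state sum in Definition \ref{defn:unpart} and identify it with $J_L(q)$.

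First I would observe that for every resolution $\rho \in \resolution{T}$, the planar diagram $\rho(T)$ also has signature $(0;0)$, so it consists solely of some number $N(\rho)$ of circles properly embedded in $\mathrm{int}\,\Sigma_{\mathbb{D}}$, with no arcs. Iterating the Temperley-Lieb property and using the Corollary to Proposition \ref{prop:nointersect} (which identifies the empty planar diagram with the identity on $\Z[q^{1/2},q^{-1/2}]$) yields $Z_{\rho(T)} = (q+q^{-1})^{N(\rho)}$. Substituting into the definition of the unshifted partition map gives
$$
\widetilde{Z}_T \;=\; \sum_{\rho \in \resolution{T}} (-q)^{h(\rho)} (q+q^{-1})^{N(\rho)},
$$
which is exactly the unnormalized Kauffman bracket $\widetilde{J}_L(q)$ in Khovanov's conventions. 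Multiplying through by the shift $(-1)^{n_-(T)} q^{n_+(T) - 2n_-(T)}$ then produces $Z_T = J_L(q)$.

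As an alternative approach, one could argue by uniqueness: Proposition \ref{prop:skein} provides the skein relation $\widetilde{Z}_T = \widetilde{Z}_{T_0} - q\,\widetilde{Z}_{T_1}$, the Temperley-Lieb property gives the disjoint-unknot rule $\widetilde{Z}_{T \sqcup U} = (q+q^{-1})\,\widetilde{Z}_T$, and the Corollary after the Temperley-Lieb proposition provides $\widetilde{Z}_U = q+q^{-1}$. These are precisely the three axioms that recursively characterize $\widetilde{J}_L$, so an induction on the number of crossings identifies the two. I expect no real obstacle: the theorem amounts to matching two state sums built from identical combinatorial rules, and the only care needed is in verifying that the canonical identification $\mathcal{I}_0 \cong \Z[q^{1/2},q^{-1/2}]$ at the input and output is preserved through the composition, which is immediate since $I_0$ is a free generator of $\mathcal{I}_0$.
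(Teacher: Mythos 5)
Your first argument is exactly the paper's proof: each resolution diagram of a signature-$(0;0)$ tangle is a disjoint union of circles, the Temperley-Lieb property gives $Z_{\rho(T)}=(q+q^{-1})^{N(\rho)}$, and substituting into the state sum of Definition \ref{defn:unpart} reproduces the state sum for Khovanov's $J_L(q)$. The alternative skein-relation characterization you sketch is a valid second route but not needed; the proposal is correct and matches the paper's approach.
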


\noindent This  follows from the Temperley-Lieb property of the partition maps for planar diagrams. Given $T$ with signature $(0;0)$, any resolution diagram $\rho(T)$ consists of closed circles embedded in $\Sigma_{\mathbb{D}}$. Each map $Z_{\rho(T)}$ is just multiplication by $(q+q^{-1})^{n_{C}(\rho)}$ where $n_{C}(\rho)$ is the number of circles. Substituting into equation (\ref{eqn:extension}) shows that $Z_{T}$ is multiplication by 
$$
(-1)^{n_{-}(T)}q^{(n_{+}-2n_{-})(T)} \sum_{\rho \in \resolution{T}} (-q)^{h(\rho)} (q+q^{-1})^{n_{C}(\rho)}
$$
which is the state sum representation of $J_{T}(q)$. \\

\section{Examples}\label{sec:examples}

\subsection{Computing $Z_{P}$ for a simple planar diagram}\ \\ 

\noindent  Suppose we have a planar diagram $P$ as on the left of 
\begin{center}
\includegraphics[scale=0.6]{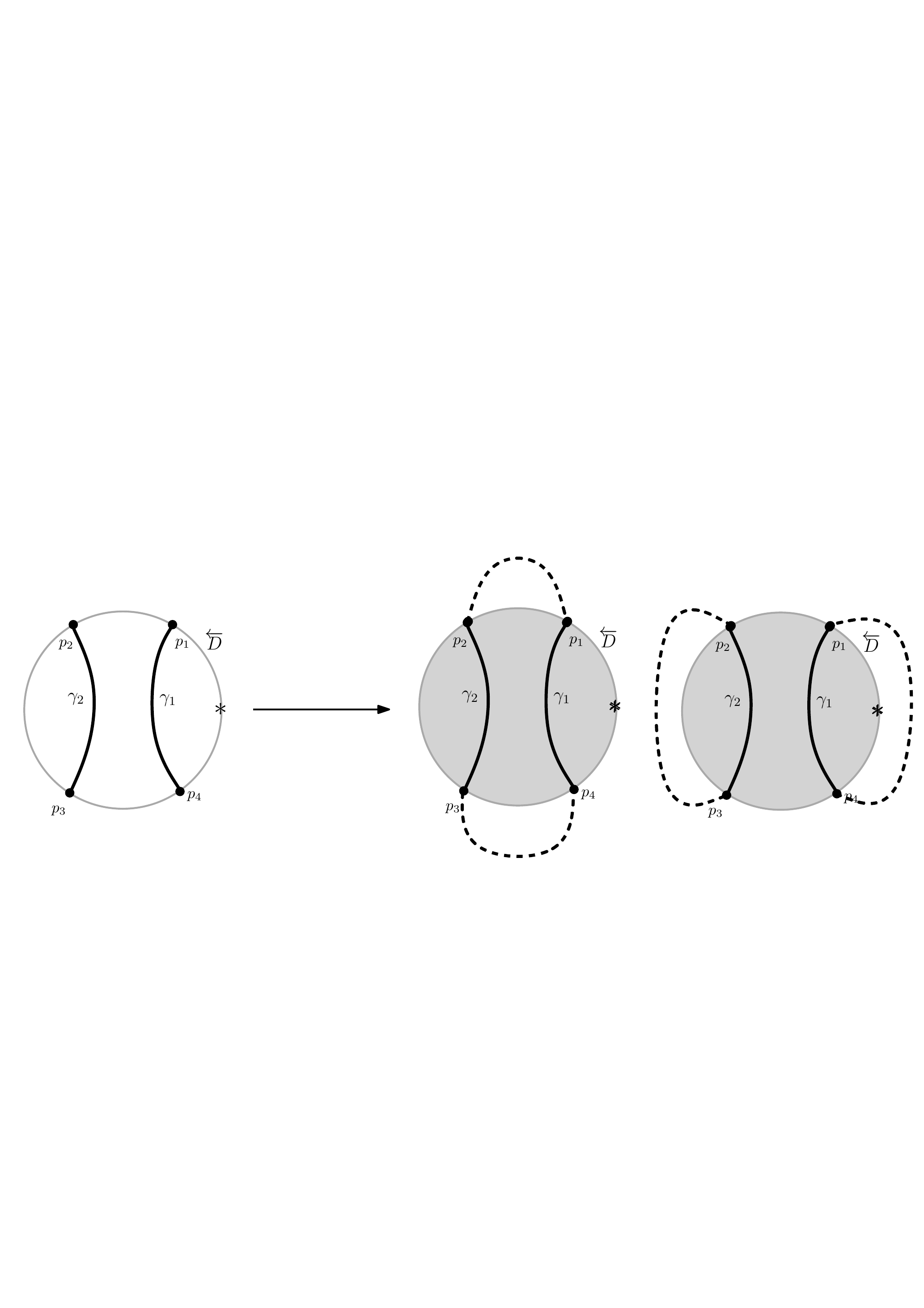}
\end{center}
For this planar diagram the map $Z_{P}$ should be a map $\mathcal{I}_{0} \cong \Z[q^{1/2}, q^{-1/2}] \lra \mathcal{I}_{4}$.\\
\ \\
\noindent There are two outside matchings $\righty{m}_{1}$ and $\righty{m}_{2}$ which can be used in $\righty{D}_{0}$ to construct the multiply, cleaved links with associated tangle $P$. These are shown on the right. Then the first diagram on the right, decorated with a $+$, contributes $q^{1/2}I_{A_{+}}$ to the image of $1 \in  \Z[q^{\pm 1/2}] $, since $W = q^{1-1/2}$, while $A_{-}$ contributes $q^{-1+1/2}I_{A_{-}} = q^{-1/2}I_{A_{-}}$. If we use $\righty{m}_{2}$, and decorate the resulting circles, we obtain the generators $B_{\pm\pm}$ from Figure \ref{fig:I4Gens}. $B_{++}$ contributes $q^{1-1/2}q^{1-1/2}I_{B_{++}} = q\,I_{B_{++}}$, while $B_{+-}$ contributes $q^{1-1/2}q^{-1+1/2}I_{B_{+-}} = I_{B_{+-}}$. Likewise, $B_{-+}$ contributes $q^{-1+1/2}q^{1-1/2}I_{B_{-+}} = I_{B_{+-}}$ and $B_{--}$ contributes  $q^{-1+1/2}q^{-1+1/2}I_{B_{--}} = q^{-1}I_{B_{--}}$. Thus the map $Z_{P}$ is the linear extension of
$$
1 \lra q^{1/2}I_{A_{+}} + q^{-1/2}I_{A_{-}} + q\,I_{B_{++}} + I_{B_{+-}} + I_{B_{+-}} + q^{-1}I_{B_{--}}
$$
Now suppose we move $\ast$ to sit between $p_{1}$ and $p_{2}$,  then, according to our convention, we must renumber the points as $\{q_{1}, q_{2}, q_{3}, q_{4}\}$ where $q_{i} = p_{i+1}$. Then $P'$ will be the matching $q_{1} \leftrightarrow q_{2}$ and $q_{3} \leftrightarrow q_{4}$, while $\righty{m}_{1}$ becomes $q_{1} \leftrightarrow q_{4}$ and $q_{2} \leftrightarrow q_{3}$. Using $\righty{m}_{1}$ now gives generators of type $D_{\pm}$ while using $\righty{m}_{2}$ gives  a generator of type $C_{\pm\pm}$. Indeed, $Z_{P'}$ with the new choice of marked point is the map determined by
$$
1 \lra q^{1/2}I_{D_{+}} + q^{-1/2}I_{D_{-}} + q\,I_{C_{++}} + I_{C_{+-}} + I_{C_{+-}} + q^{-1}I_{C_{--}}
$$

\subsection{Representation of the braid group} Let $B_{2n}$ be the braid group on $2n$-strands. Each diagram $D_{\sigma}$ for $\sigma \in B_{2n}$ can be thought of as an oriented tangle diagram subordinate to two discs. If we think of the diagram $D_{\sigma}$ as being in $[0,1] \times [0,1]$, with $2n$ points on $[0,1] \times \{0\}$ and $[0,1] \times \{1\}$, we can glue $(0,x)$ to $(1,x)$ to obtain a diagram in an annulus. We glue two discs into the annulus to obtain a sphere. The points on $[0,1]\times\{0\}$ are labeled $p_{1}, \ldots, p_{2n}$ in the increasing $x$ direction, and likewise for the points on $[0,1] \times\{1\}$. Let $D_{0}$ be the disc glued to the image $[0,1] \times \{0\}$, and $D_{1}$ be the disc glued to the image of $[0,1] \times \{1\}$.  Then the orderings are correct for a tangle subordinate to $(D_{0}; D_{1})$. We will thus view braids as going down the page. For $\ast_{D_{0}}$ we use the image of $(0,0) \sim (1,0)$, while for $\ast_{D_{1}}$ we use the image of $(0,1)\sim(1,1)$.\\
\ \\
\noindent This construction allows us to associate a map $Z_{\sigma}: \mathcal{I}_{2n} \lra  \mathcal{I}_{2n}$ to any braid diagram $\sigma$. Furthermore, if $\sigma$ and $\sigma'$ are braids in $B_{2n}$ then $\sigma' \circ_{1} \sigma$ is the braid found by stacking $\sigma$ on top of $\sigma'$ (i.e. identifying $[0,1]\times\{0\}$ for $\sigma$ with $[0,1] \times\{1\}$ for $\sigma'$ using $(x,0) \sim (x,1)'$). The composition formula shows that $Z_{\sigma'\sigma}=Z_{\sigma'}Z_{\sigma}$ and thus the maps $Z_{\sigma}$ are a representation of $B_{2n}$.  Furthermore, the non-degeneracy property implies that $Z_{e} = \mathrm{Id}$ where $e$ is the trivial braid. \\
\ \\
\noindent  We compute the representation of $B_{2}$. Let $\sigma \in B_{2}$ correspond the the braid diagram with a single right-handed crossing, and both strands oriented down the page. Then $n_{-}(\sigma) = 0$ and $n_{+}(\sigma) = 1$, so $Z_{\sigma} = (-1)^{0}q^{1 - 2\cdot 0}\widetilde{Z}_{\sigma}$. We can use the skein relation to expand $\widetilde{Z}_{\sigma} = \widetilde{Z}_{I} - q\widetilde{Z}_{T'}$ where $T'$ is the tangle consisting of a cup and a cap, see Figure \ref{fig:cupcap}. The latter map can be found from composing the map $\mathcal{I}_{2} \lra \Z[q^{1/2}, q^{-1/2}]$ for a single cup, with the map for a single cap, which is dual to the former map. These both occur in the annulus, but as one of the circles does not intersect either the cup or cap, that circle can be erased. Thus,
$$
Z_{T'}: \left\{\begin{array}{l} I_{+} \stackrel{\mathrm{cup}}{\lra} q^{1/2} \stackrel{\mathrm{cap}}{\lra} q^{1/2}\big(q^{1/2}I_{+} + q^{-1/2}I_{-}\big) = q I_{+} + I_{-} \\
I_{-} \stackrel{\mathrm{cup}}{\lra} q^{-1/2} \stackrel{\mathrm{cap}}{\lra} q^{-1/2}\big(q^{1/2}I_{+} + q^{-1/2}I_{-}\big) = I_{+} + q^{-1} I_{-} \end{array}\right.
$$ 
Converting to matrix notation in the basis $\{I_{+},I_{-}\}$, we have
\begin{equation*}
\begin{split}
\widetilde{Z}_{\sigma} & = \left[\begin{array}{cc} 1 & 0 \\ 0 & 1 \end{array}\right] - q \left[\begin{array}{cc} q & 1 \\ 1 & q^{-1} \end{array}\right] \\
&= \left[\begin{array}{cc} 1 -q^{2} & -q \\ -q & 0 \end{array}\right] 
\end{split}
\end{equation*}
Thus, 
$$
Z_{\sigma} = q\left[\begin{array}{cc} 1 -q^{2} & -q \\ -q & 0 \end{array}\right] = \left[\begin{array}{cc} q -q^{3} & -q^{2} \\ -q^{2} & 0 \end{array}\right]
$$
\ \\
\noindent We can use this computation to calculate $Z_{\sigma^{-1}}$. Since $\sigma^{-1}$ is represented by the mirror diagram subordinate to the same disc configuration, we have $Z_{\sigma^{-1}}(\xi) = \big({Z}_{\sigma}(\xi^{\ast})\big)^{\ast}$
Thus,
$$
Z_{\sigma^{-1}}(I_{+})= \big({Z}_{\sigma}(I_{-})\big)^{\ast} = \big(-q^{2}I_{+})^{\ast} = -q^{-2}I_{-}
$$
while
$$
Z_{\sigma^{-1}}(I_{-})= \big({Z}_{\sigma}(I_{+})\big)^{\ast} = \big((q-q^{3})I_{+}-q^{2}I_{-})^{\ast} = (q^{-1}-q^{-3})I_{-}-q^{-2}I_{+}
$$
In matrix notation, this yields
$$
Z_{\sigma^{-1}} =  \left[\begin{array}{cc} 0 & -q^{-2} \\ -q^{-2} & q^{-1}-q^{-3} \end{array}\right]
$$
It can then be checked directly that $Z_{\sigma}Z_{\sigma^{-}} = \I = \Z_{\sigma^{-1}}Z_{\sigma}$, as required by the Markov moves on braid diagrams. \\
\ \\
\noindent The informed reader will recognize $Z_{\sigma}$ and $Z_{\sigma^{-1}}$ as the representation derived from the Reshetikhin-Turaev invariant from $U_{q}(\mathfrak{sl}_{2})$. However, this cannot persist for $n > 2$ since the Reshetikin-Turaev invariant will give a map between modules with dimension $2^{n}$ over $\Z/2\Z$, which is not the dimension of $\mathcal{I}_{n}$. For instance, $\mathcal{I}_{4}$ has dimension $12$. 

\begin{center}
\begin{figure}
\includegraphics[scale=0.33]{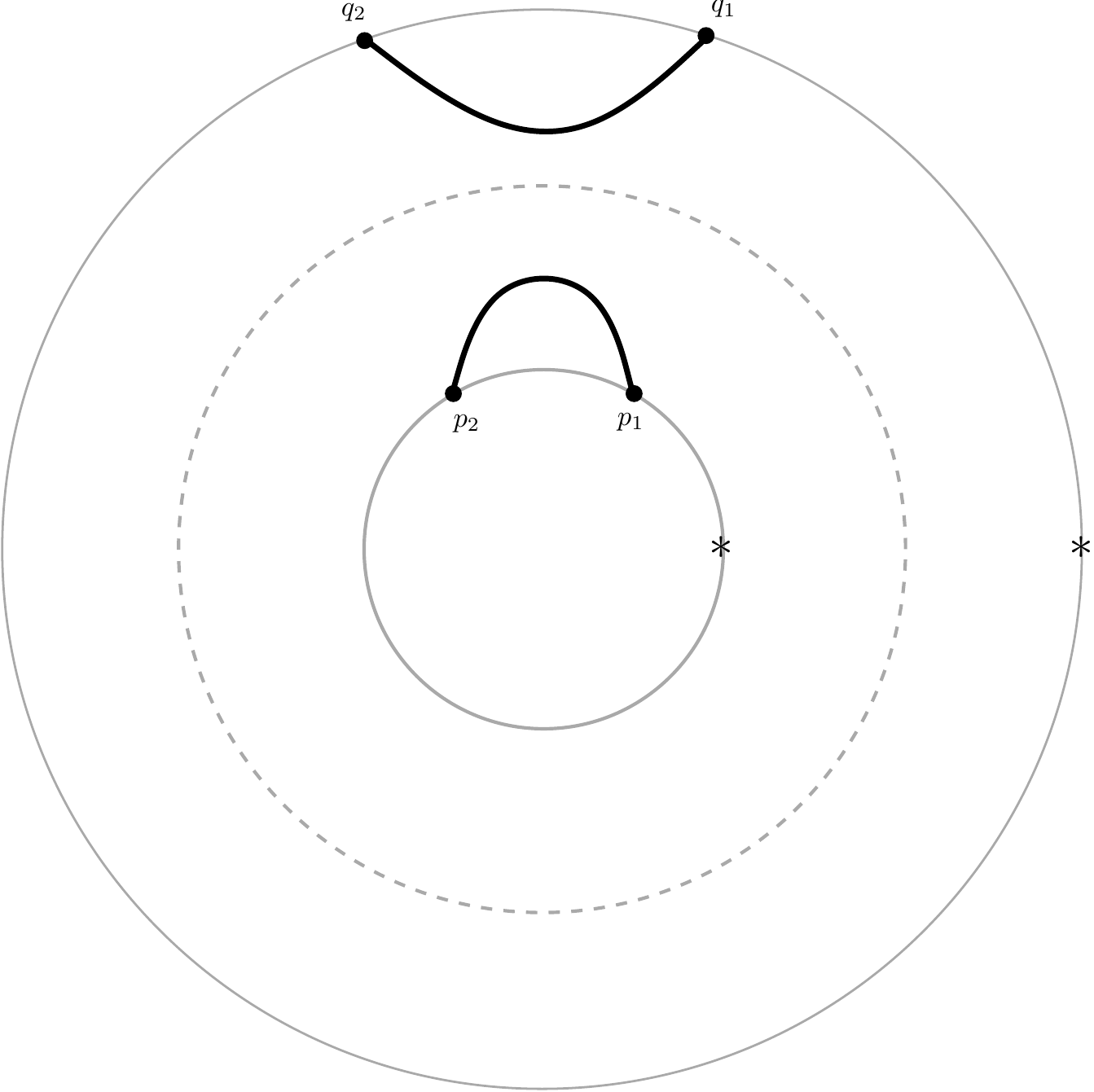}
\caption{} \label{fig:cupcap}
\end{figure}
\end{center}

\noindent{\bf Note:} Above we computed the partition map $\mathcal{I}_{2} \lra \mathcal{I}_{2}$ for the planar tangle in Figure \ref{fig:cupcap}. In matrix form, this map is 
\begin{equation}\label{eqn:cupcap}
\left[\begin{array}{cc} q & 1 \\ 1 & q^{-1} \end{array}\right]
\end{equation}
\noindent If we stack two copies of this tangle, we obtain a tangle with a circle. If we delete this circle we obtain another copy of the tangle in \ref{fig:cupcap}. We illustrate the Temperley-Lieb property by computing the square of the matrix:
\begin{equation*}
\begin{split}
\left[\begin{array}{cc} q & 1 \\ 1 & q^{-1} \end{array}\right]^{2} &= \left[\begin{array}{cc} q & 1 \\ 1 & q^{-1} \end{array}\right]\left[\begin{array}{cc} q & 1 \\ 1 & q^{-1} \end{array}\right]\\
&= \left[\begin{array}{cc} q^{2} + 1 & q+q^{-1} \\ q+q^{-1} & 1 + q^{-2} \end{array}\right]\\
&= (q+q^{-1}) \left[\begin{array}{cc} q & 1 \\ 1 & q^{-1} \end{array}\right]\\
\end{split}
\end{equation*}

\section{Relationship to other theories}\label{sec:relationships}

\subsection{To bordered Khovanov homology}  In \cite{RobD}, \cite{RobA} the author, inspired by bordered Floer homology, describes a bordered Khovanov homology for tangles in a disc. The formal algebraic structure of these tangle invariants mimics the formal structure of \Oz, Lipshitz, and Thurston's description of bordered Floer homology, \cite{Bor1}. In particular, the construction in \cite{RobA} takes a tangle diagram $T$ in a disc $D$, with a marked point $\ast \in \partial D$:\\

\begin{center}
\includegraphics[scale=0.5]{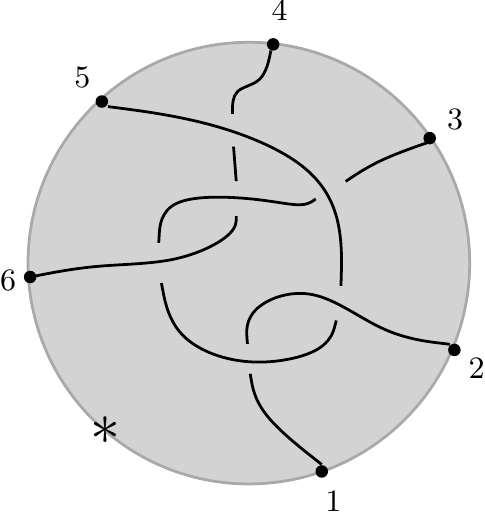}
\end{center}

\noindent and associates to it a bigraded differential module $\leftComplex{T}$ over the differential bigraded algebra $\mathcal{B}\Gamma_{n}$, where $T$ has $2n$ endpoints on $\partial D$. In \cite{Deca} the author describes the decategorification of $\leftComplex{T}$ as a map $[\leftComplex{P}]: \Z[q^{1/2}, q^{-1/2}] \lra \mathcal{I}_{2n_{P}}$ associated to an {\em inside} tangle, i.e. one with signature $(n_{0})$.  Like the Jones polynomial, this decategorification arises as a sum over all the complete resolutions of the diagram. In this paper, we have also assigned a map  $Z_{T}: \Z[q^{1/2}, q^{-1/2}] \lra \mathcal{I}_{2n_{0}}$ to this tangle. 
Similarly, let $T$ be a tangle diagram with signature $(0;n_{1})$, i.e. an {\em outside} tangle. In \cite{Deca} the decategorification of $\rightComplex{T}$ is shown to be a map $[\rightComplex{T}]: \mathcal{I}_{2n_{P}} \lra \Z[q^{1/2}, q^{-1/2}]$ while in this paper we have assigned this diagram a map $Z_{T}:  \mathcal{I}_{2n_{P}} \lra \Z[q^{1/2}, q^{-1/2}]$. These maps are equal:

\begin{prop}
Let $T$ be an inside tangle {\rm (}as in \cite{Deca}{\rm )}. Then the map $[\leftComplex{T}]$ equals the partition map $Z_{T}$, computed from the disc configuration $(\lefty{D})$. If $T'$ is an outside tangle, then $[\rightComplex{T'}]$ equals the partition map $Z_{T'}$, computed using the disc configuration $(\lefty{D}_{0}; \lefty{D})$, where $\lefty{D}_{0}$ is any disk in $\righty{D}$ separated from $T'$.
\end{prop}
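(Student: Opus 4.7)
The plan is to reduce the statement to the case of a planar (crossingless) diagram and then match generators and weights directly. By Definition \ref{defn:unpart}, $Z_T$ is obtained from the partition maps $Z_{\rho(T)}$ of the resolution diagrams through the state sum $\sum_{\rho \in \resolution{T}} (-q)^{h(\rho)} Z_{\rho(T)}$, with the overall normalization $(-1)^{n_-(T)} q^{(n_+-2n_-)(T)}$. The main computation of \cite{Deca} expresses $[\leftComplex{T}]$ as the analogous state sum applied to $[\leftComplex{\rho(T)}]$, with identical normalization. It therefore suffices to prove the equality for a crossingless planar diagram $P$ subordinate to $(\lefty{D})$; the two state sum formulas and linearity then extend the equality to all tangle diagrams $T$.

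For such a planar $P$ with $2n$ endpoints on $\partial \lefty{D}$, the set $\mathcal{M}(P)$ consists of pairs $(\righty{m},\sigma)$, where $\righty{m}$ is an outside matching of the $2n$ endpoints that completes $P$ into a disjoint union of circles in $S$, and $\sigma$ decorates each resulting circle by $\pm$. Thus
$$Z_P(1) = \sum_{\righty{m},\,\sigma} W(P \cup \righty{m}, \sigma)\, I_{(P \cup \righty{m}, \sigma)},$$
and each summand lies in the direct summand of $\mathcal{I}_{2n}$ indexed by the cleaved link $(P \cup \righty{m}, \sigma)$. I would establish that the generators of the bigraded module $\leftComplex{P}$ of \cite{RobA} in the crossingless case are in canonical bijection with these same pairs $(\righty{m},\sigma)$, each such generator lying under the idempotent of $\mathcal{B}\Gamma_{2n}$ indexed by the outside matching $\righty{m}$; the decategorification of \cite{Deca} then sends each such generator to a scalar multiple of $I_{(P \cup \righty{m},\sigma)}$, with the scalar equal to the $q$-grading of the generator.

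The heart of the argument is then the verification that the $q$-grading of each generator of $\leftComplex{P}$ coincides with the weight $W(P \cup \righty{m},\sigma)$. Since $P$ is inside, every circle $C$ of $P \cup \righty{m}$ crosses only the single boundary $\partial\lefty{D}$, so $N_M(C)=1$ and the weight contribution is $q^{\pm 1/2}$; this must be matched against the explicit grading shifts used to construct $\leftComplex{P}$ and its decategorification in \cite{Deca}. I expect this convention-chasing to be the main obstacle, but it is a term-by-term comparison and is purely bookkeeping once the bijection above is in hand. The outside case is dual: for an outside tangle $T'$ subordinate to $(\lefty{D}_0;\lefty{D})$ with $\lefty{D}_0$ carrying no boundary intersections, evaluation of $Z_{T'}$ on $I_{(L,\sigma)}$ counts, with weight, the completions of $T' \cup L$ across $\lefty{D}_0$; by \cite{Deca}, evaluation of $[\rightComplex{T'}]$ on $I_{(L,\sigma)}$ is computed by the same completions, with grading matched to weight exactly as in the inside case. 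Linearity over $\Z[q^{1/2},q^{-1/2}]$ then concludes the proof.
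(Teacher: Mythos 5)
Your proposal is consistent with the paper, which offers no detailed argument at all here: its entire ``proof'' is the remark that the result ``is a matter of translating between the conventions of this paper with those of \cite{Deca}.'' Your outline—reducing to crossingless resolutions via the common state sum, matching generators of $\leftComplex{P}$ under the idempotents of $\mathcal{B}\Gamma_{2n}$ with the pairs $(\righty{m},\sigma)$, and checking that the $q$-grading shifts agree with the weights $W$—is exactly the translation the paper gestures at, carried out in more detail than the paper itself provides.
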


\noindent For tangles $T_{1}$ and $T_{2}$ which glue to form a link $L$, the composition of the maps  $[\leftComplex{T_{1}}]$ and $[\rightComplex{T_{2}}]$ is multiplication by $J_{L}(q)$, \cite{Deca}. The analog for the partition maps is proposition \ref{item:normalization}. Thus, the maps in this paper are a generalization of Khovanov's version of the Jones polynomial to tangles over planar surfaces, and their planar algebra structure.\\
\ \\
\noindent Proving this result is a matter of translating between the conventions of this paper with those of \cite{Deca}

\subsection{Comparison with the Temperley-Lieb planar algebra:}\label{sec:comparison} Let $R$ be a ring and $\delta \in R$. The Temperley-Lieb planar algebra $TL_{R}(\delta)$, \cite{Jon2}, is the planar algebra where $TL_{2n}$ is the $R$-module spanned by the planar matchings in a disc with signature $(n)$. Let $P$ be a planar diagram and $m_{1}, \ldots, m_{k}$ be planar matchings in discs with signatures $(n_{1}), \ldots, (n_{k})$. If we glue each $m_{i}$ into the $i^{th}$  boundary of $P$, while aligning marked points, we obtain a planar diagram with $n_{C} \geq 0$ circles. Let $m_{0}$ be the matching resulting from deleting the circles. We then set   
$$
Z_{P}(m_{1}\otimes \cdots \otimes m_{k}) = (q + q^{-1})^{n_{C}}m_{0}
$$
and define the partition map $Z_{P}: TL_{2n_{1}} \otimes\!\cdots\!\otimes TL_{2n_{k}} \lra TL_{2n_{0}}$ to be the linear extension to the tensor product. \\
\ \\
\noindent If we let $R = \Z[q^{1/2}, q^{-1/2}]$ and $\delta = q + q^{-1}$ we obtain a Temperley-Lieb planar algebra with the same properties as the one defined in this paper. However, these are not identical: the dimension of $TL_{2}$ is $1$ since there is only one planar matching on $2$ points, whereas the dimension of $\mathcal{I}_{2}$ is $2$. Likewise, the dimension of $TL_{4}$ is $2$, whereas the dimension of $\mathcal{I}_{4}$ is $12$, see Figure \ref{fig:I4Gens}. This difference of dimension occurs for each $n \geq 1$.\\
\ \\
\noindent There is a relationship between the two, however. According to Jones, \cite{Jon2}, ``each vector space $V_{2n}$ in a unital planar algebra will contain a quotient of the vector space of linear combinations of TL diagrams. '' In our setting, this relationship comes from the observation that for each planar matching $m$ in a disc of type $(n)$, there is an element $Z_{m}(1) \in \mathcal{I}_{2n}$ since $m$ is a planar diagram. Since the planar algebra satisfies the Temperley-Lieb property the image of these elements is closed under the planar algebra compositions, and one obtains a map from $TL_{2n}$ to $\mathcal{I}_{2n}$ which commutes with the planar algebra operations.\\
\ \\
\noindent Likewise, any unital planar algebra provides a representation of the traditional Temperley-Lieb algebra (not planar algebra) of planar $(2n,2n)$-tangles in a square. We consider the map $TL_{2} \lra TL_{2}$ for the diagram in figure \ref{fig:cupcap} representing the sole non-trivial generator of the Temperley-Lieb algebra in this case, and the corresponding map for $\mathcal{I}_{2} \lra \mathcal{I}_{2}$. For $TL_{2}$ this is the map $\Z[q^{1/2}, q^{-1/2}] \lra \Z[q^{1/2}, q^{-1/2}]$ found by multiplying by $q + q^{-1}$. In equation \ref{eqn:cupcap} in the previous section, we computed the map $\mathcal{I}_{2} \lra \mathcal{I}_{2}$ for the generator of the Temperley-Lieb algebra as a matrix $M$. To understand $M$, we first solve the eigenvalue equation $M(v) = \lambda(q) v$. It has two sets of solutions: (1) when $\lambda(q) = q + q^{-1}$ and $v$ is in the submodule generated by $(q^{1/2}, q^{-1/2})$, and (2) when $\lambda(q) = 0$ and $v$ is in the submodule generated by $(-q^{-1/2},q^{1/2})$. Since $Z_{m}(1)=(q^{1/2}, q^{-1/2})$ for the only planar matching on $2$ points, the span of $(q^{1/2}, q^{-1/2})$ is the image of $TL_{2}$. Since $M^2 = (q+ q^{-1})M$ we know that $\mathrm{im}\,M$ also lies in the image of $TL_{2}$. Furthermore, we have seen that $\mathrm{ker}\,M$ is spanned by $(-q^{-1/2},q^{1/2})$.  However, the kernel and the image of $M$ do not span $\mathcal{I}_{2}$ (the matrix with the generating vectors as columns has determinant $q + q^{-1}$ and is thus not invertible). The quotient of $\mathcal{I}_{2}$ by the span is isomorphic to $\Z[q^{1/2}]/(q^2+1)$ as a $\Z[q^{1/2}, q^{-1/2}]$-module. Consequently, the representation on $\mathcal{I}_{2}$ has the image of $TL_{2}$ as an invariant submodule, but the image of $TL_{2}$ is not a direct summand of this representation.   \\
\ \\
\noindent Furthermore we can identify the image of $TL_{4}$ in $\mathcal{I}_{4}$. If we use the basis
\begin{equation}
\big\{I_{C_{++}}, I_{C_{-+}}, I_{C_{+-}},I_{C_{--}},I_{D_{+}},I_{D_{-}},I_{A_{+}},I_{A_{-}},I_{B_{++}},I_{B_{+-}},I_{B_{-+}},I_{B_{--}} \big\}
\end{equation}
\noindent for $\mathcal{I}_{4}$ we can compute the matrices corresponding to the linear maps for the three generators of the Temperley-Lieb algebra on $4$ points. These matrices will be denoted $M_{1}, M_{2},$ and $M_{3}$ as in Figure \ref{fig:matrices}. We can show that the resulting representation is not $TL_{4}$ stabilized by any number of trivial representations by examining the vectors in the intersections of the kernels of these three matrices. Since $\mathcal{I}_{4}$ is $12$ dimensional, and $TL_{4}$ is $2$ dimensional observing a smaller than $12-2 = 10$ dimensional null space for the representation disproves that it is formed by stabilization. The kernels of these three matrices are
\begin{equation*}
\begin{split}
\mathrm{ker}\,M_{1} &= \mathrm{Span}\big\{I_{C_{++}}\!- q^{1/2}I_{A_{+}}, I_{C_{+-}} - q^{-1/2}I_{A_{+}}, I_{C_{-+}}- q^{1/2}I_{A_{-}}, I_{C_{--}} - q^{-1/2}I_{A_{-}}, \\ 
& \hspace{0.75in} I_{D_{+}}, I_{D_{-}}, I_{B_{++}}, I_{B_{+-}}, I_{B_{-+}}, I_{B_{--}} \big\}\\
\end{split}
\end{equation*} 
\begin{equation*}
\begin{split}
\mathrm{ker}\,M_{2} &= \mathrm{Span}\big\{I_{C_{++}}, I_{C_{-+}}, I_{C_{+-}}, I_{C_{--}}, I_{A_{+}}, I_{A_{-}},\\
& \hspace{0.75in} I_{B_{++}}\!- q^{1/2}I_{D_{+}}, I_{B_{+-}} - q^{-1/2}I_{D_{+}}, I_{B_{-+}}- q^{1/2}I_{D_{-}}, I_{B_{--}} - q^{-1/2}I_{D_{-}} \big\}\\
\mathrm{ker}\,M_{3} &= \mathrm{Span}\big\{I_{C_{++}}\!- q^{1/2}I_{A_{+}}, I_{C_{-+}} - q^{-1/2}I_{A_{+}}, I_{C_{+-}}- q^{1/2}I_{A_{-}}, I_{C_{--}} - q^{-1/2}I_{A_{-}}, \\ 
& \hspace{0.75in} I_{D_{+}}, I_{D_{-}}, I_{B_{++}}, I_{B_{+-}}, I_{B_{-+}}, I_{B_{--}} \big\}\\
\end{split}
\end{equation*} 
Notice that $\mathrm{ker}\,M_{1}$ and $\mathrm{ker}\,M_{3}$  are not the same submodules: there is a change in the subscripts. The intersection of these three submodules is seven dimensional:
\begin{equation*}
\begin{split}
\mathrm{ker}\,M_{1} \cap& \mathrm{ker}\,M_{2} \cap \mathrm{ker}\,M_{3} = \\
& \mathrm{Span}\big\{I_{B_{++}}\!- q^{1/2}I_{D_{+}}, I_{B_{+-}} - q^{-1/2}I_{D_{+}}, I_{B_{-+}}- q^{1/2}I_{D_{-}}, I_{B_{--}} - q^{-1/2}I_{D_{-}},\\
& \hspace{0.5in} I_{C_{++}}\!- q^{1/2}I_{A_{+}}, I_{C_{--}} - q^{-1/2}I_{A_{-}}, I_{C_{-+}}+ I_{C_{+-}} - q^{-1/2}I_{A_{+}} - q^{1/2}I_{A_{-}} \big\}
\end{split}
\end{equation*}
We can conclude that $\mathcal{I}_{4}$ is not the direct sum, as a representation, of $TL_{4}$ and trivial representations of the Temperley-Lieb algebra on four points.

\subsection{To Khovanov's functor invariant for tangles:}  In \cite{Khta}, M. Khovanov describes an approach to invariants for traditional $(2n,2m)$-tangles with diagrams in a square and the decategorification of the modules used in his theory. The relationship between the tangle homology in \cite{Khta} and that in \cite{RobD} and \cite{RobA} is the subject of recent work by Andy Manion, \cite{Mani}. He shows that the relationship is novel and quite complicated. We can see a reflection of that observation here. Khovanov's constructions can be modified to give a planar algebra type structure. Furthermore,  in a remark on pg. 714 Khovanov described how to turn the decategorification of his tangle homology into linear maps similar to those examined in this paper. However,
in section 5 of \cite{Khta} he shows that a basis for the decategorification of the modules for tangles in a disc is in one-to-one correspondence with the planar matchings of the tangles boundary points. As a result, the dimensions of the underlying vector spaces will be different from those in this paper, and thus, the maps will not be identical.

\newpage

\begin{figure}
$$
\begin{array}{ccc}
\includegraphics[scale=0.3]{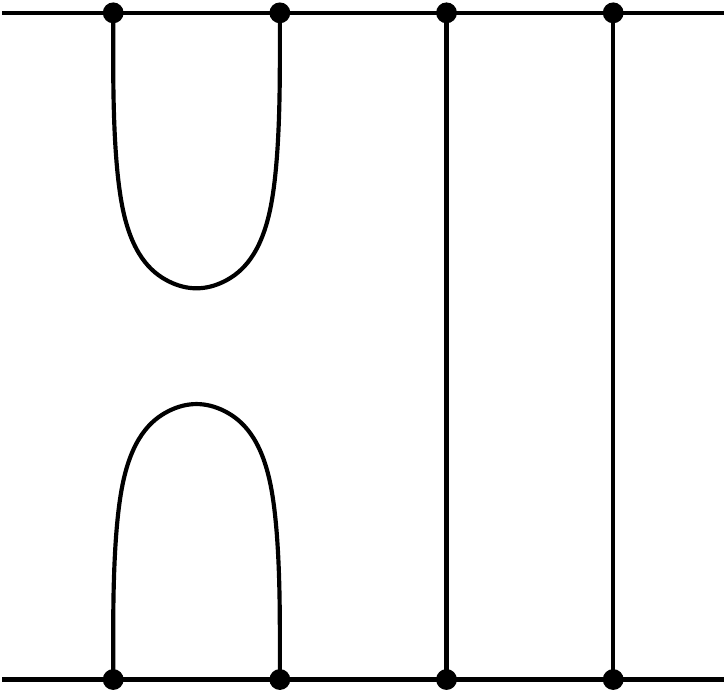} & \hspace{0.5in} &
\left[
\begin{array}{cccccccccccc}
q & 0 & 1 & 0 & 0 & 0 & q^{1/2} & 0 & 0 & 0 & 0 & 0 \\
0 & q & 0 & 1 & 0 & 0 & 0 & q^{1/2} & 0 & 0 & 0 & 0 \\
1 & 0 & q^{-1} & 0 & 0 & 0 & q^{-1/2} & 0 & 0 & 0 & 0 & 0 \\
0 & 1 & 0 & q^{-1} & 0 & 0 & 0 & q^{-1/2} & 0 & 0 & 0 & 0 \\
q^{1/2} & 0 & q^{-1/2} & 0 & 0 & 0 & 1 & 0 & 0 & 0 & 0 & 0 \\
0 & q^{1/2} & 0 & q^{-1/2} & 0 & 0 & 0 & 1 & 0 & 0 & 0 & 0 \\
0 & 0 & 0 & 0 & 0 & 0 & 0 & 0 & 0 & 0 & 0 & 0 \\
0 & 0 & 0 & 0 & 0 & 0 & 0 & 0 & 0 & 0 & 0 & 0 \\
0 & 0 & 0 & 0 & 0 & 0 & 0 & 0 & 0 & 0 & 0 & 0 \\
0 & 0 & 0 & 0 & 0 & 0 & 0 & 0 & 0 & 0 & 0 & 0 \\
0 & 0 & 0 & 0 & 0 & 0 & 0 & 0 & 0 & 0 & 0 & 0 \\
0 & 0 & 0 & 0 & 0 & 0 & 0 & 0 & 0 & 0 & 0 & 0 \\
\end{array}
\right]
\end{array}
$$

$$
\begin{array}{ccc}
\includegraphics[scale=0.3]{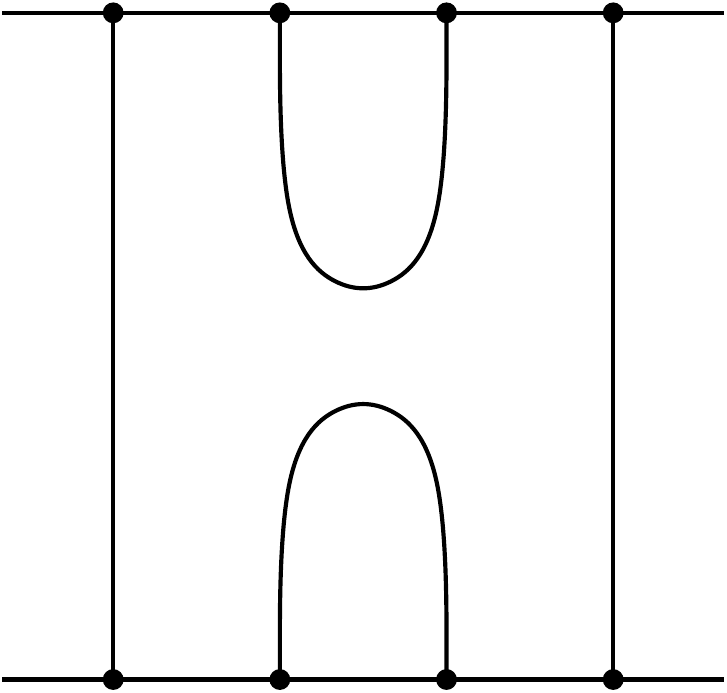} & \hspace{0.5in} &
\left[
\begin{array}{cccccccccccc}
0 & 0 & 0 & 0 & 0 & 0 & 0 & 0 & 0 & 0 & 0 & 0 \\
0 & 0 & 0 & 0 & 0 & 0 & 0 & 0 & 0 & 0 & 0 & 0 \\
0 & 0 & 0 & 0 & 0 & 0 & 0 & 0 & 0 & 0 & 0 & 0 \\
0 & 0 & 0 & 0 & 0 & 0 & 0 & 0 & 0 & 0 & 0 & 0 \\
0 & 0 & 0 & 0 & 0 & 0 & 0 & 0 & 0 & 0 & 0 & 0 \\
0 & 0 & 0 & 0 & 0 & 0 & 0 & 0 & 0 & 0 & 0 & 0 \\
0 & 0 & 0 & 0 & 1 & 0 & 0 & 0 & q^{1/2} & q^{-1/2} & 0 & 0  \\
0 & 0 & 0 & 0 & 0 & 1 & 0 & 0 & 0 & 0 & q^{1/2} & q^{-1/2}   \\
0 & 0 & 0 & 0 & q^{1/2} & 0 & 0 & 0 & q & 1 & 0 & 0 \\
0 & 0 & 0 & 0 & q^{-1/2} & 0 & 0 & 0 & 1 & q^{-1} & 0 & 0  \\
0 & 0 & 0 & 0 & 0 & q^{1/2} & 0 & 0 & 0 & 0 & q & 1    \\
0 & 0 & 0 & 0 & 0 & q^{-1/2} & 0 & 0 & 0 & 0 & 1 & q^{-1} \\
\end{array}
\right]
\end{array}
$$

$$
\begin{array}{ccc}
\includegraphics[scale=0.3]{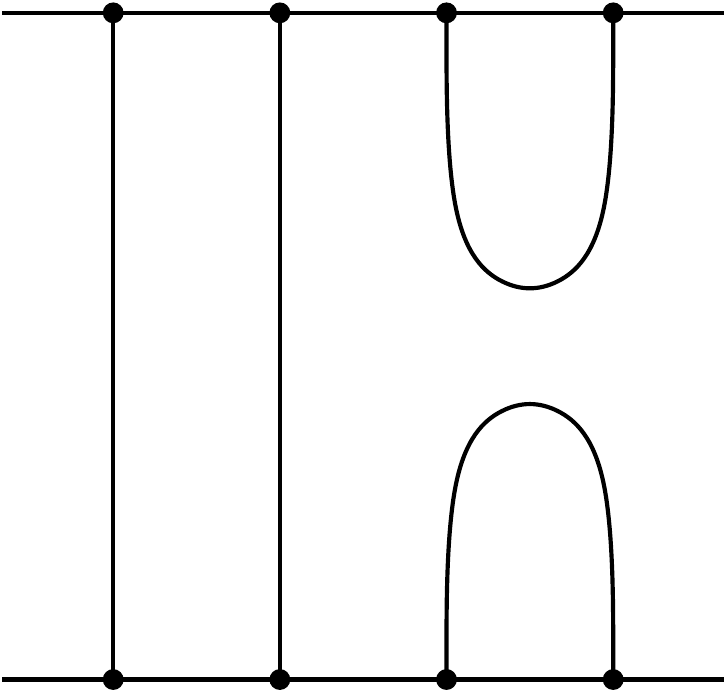} & \hspace{0.5in} &
\left[
\begin{array}{cccccccccccc}
q & 1 & 0 & 0 & 0 & 0 & q^{1/2} & 0 & 0 & 0 & 0 & 0 \\
1 & q^{-1} & 0 & 0 & 0 & 0 & q^{-1/2} & 0 & 0 & 0 & 0 & 0 \\
0 & 0 & q & 1 & 0 & 0 & 0 & q^{1/2}  & 0 & 0 & 0 & 0 \\
0 & 0 & 1 & q^{-1} & 0 & 0 & 0 & q^{-1/2} & 0 & 0 & 0 & 0 \\
q^{1/2} & q^{-1/2} & 0 & 0 & 0 & 0 & 1 & 0 & 0 & 0 & 0 & 0 \\
0 & 0 & q^{1/2} & q^{-1/2} & 0 & 0 & 0 & 1 & 0 & 0 & 0 & 0 \\
0 & 0 & 0 & 0 & 0 & 0 & 0 & 0 & 0 & 0 & 0 & 0 \\
0 & 0 & 0 & 0 & 0 & 0 & 0 & 0 & 0 & 0 & 0 & 0 \\
0 & 0 & 0 & 0 & 0 & 0 & 0 & 0 & 0 & 0 & 0 & 0 \\
0 & 0 & 0 & 0 & 0 & 0 & 0 & 0 & 0 & 0 & 0 & 0 \\
0 & 0 & 0 & 0 & 0 & 0 & 0 & 0 & 0 & 0 & 0 & 0 \\
0 & 0 & 0 & 0 & 0 & 0 & 0 & 0 & 0 & 0 & 0 & 0 \\
\end{array}
\right]
\end{array}
$$
\label{fig:matrices}
\caption{Matrices $M_{1}, M_{2},$ and $M_{3}$ (vertically from top) for the mappings $\mathcal{I}_{4} \lra \mathcal{I}_{4}$ corresponding to the Temperley-Lieb generators shown to the left of each matrix}
\end{figure}

\end{document}